\numberwithin{equation}{section}
\newtheorem{theorem}{Theorem}[section]
\newtheorem{proposition}[theorem]{Proposition}
\newtheorem{corollary}[theorem]{Corollary}
\newtheorem{lemma}[theorem]{Lemma}
\newtheorem{observation}[theorem]{Observation}
\newtheorem{question}[theorem]{Question}
\newtheorem{remark}[theorem]{Remark}
\newtheorem{defn}[theorem]{Definition}
\theoremstyle{definition}
\newcommand{\sign}{{\mathrm {sign}}}
\newcommand{\dinv}{{\mathrm {dinv}}}
\newcommand{\Hilb}{{\mathrm {Hilb}}}
\newcommand{\grFrob}{{\mathrm {grFrob}}}
\newcommand{\coinv}{{\mathrm {coinv}}}
\newcommand{\Inj}{{\mathrm {Inj}}}
\newcommand{\symm}{{\mathfrak{S}}}
\newcommand{\wt}{{\mathrm{wt}}}
\newcommand{\CC}{{\mathbb {C}}}
\newcommand{\QQ}{{\mathbb {Q}}}
\newcommand{\OP}{{\mathcal{OP}}}
\newcommand{\CCC}{{\mathcal{C}}}
\newcommand{\code}{{\mathtt{code}}}
\newcommand{\aaa}{{\mathbf {a}}}
\newcommand{\xx}{{\mathbf {x}}}
\newcommand{\maxcode}{{\mathtt {maxcode}}}
\newcommand{\initial}{{\mathrm {in}}}
\begin{document}

\title[Harmonic bases for generalized coinvariant algebras]
{Harmonic bases for generalized coinvariant algebras}

\author{Brendon Rhoades, Tianyi Yu, and Zehong Zhao}
\address
{Department of Mathematics \newline \indent
University of California, San Diego \newline \indent
La Jolla, CA, 92093, USA}
\email{(bprhoades, tiy059, zez045)@ucsd.edu}

\begin{abstract}
Let $k \leq n$ be nonnegative integers and let $\lambda$ be a partition of $k$.
S. Griffin recently introduced a quotient $R_{n,\lambda}$ of the polynomial ring 
$\QQ[x_1, \dots, x_n]$ in $n$ variables which simultaneously generalizes the 
Delta Conjecture coinvariant rings of Haglund-Rhoades-Shimozono
and the cohomology rings of Springer fibers studied by Tanisaki and Garsia-Procesi.
We describe the space $V_{n,\lambda}$ of harmonics attached to $R_{n,\lambda}$ 
and produce a harmonic basis of $R_{n,\lambda}$ indexed by certain ordered set partitions $\OP_{n,\lambda}$.
The combinatorics of this basis is governed by a new extension of 
the {\em Lehmer code} of a permutation to $\OP_{n, \lambda}$.
\end{abstract}

\keywords{ordered set partition, harmonic space, coinvariant ring}
\maketitle

\section{Introduction}
\label{Introduction}

In his Ph.D. thesis \cite{Griffin}, Sean Griffin introduced the following remarkable family of quotients of the polynomial
ring $\QQ[\xx_n] := \QQ[x_1, \dots, x_n]$ in $n$ variables.
Given a subset $S \subseteq [n] := \{1, 2, \dots, n \}$ and $d \geq 0$, let $e_d(S)$ be the 
degree $d$ elementary
symmetric polynomial in the variable set $\{ x_i \,:\, i \in S \}$.  For example, we have
\begin{equation*}
e_2(1457) = x_1 x_4 + x_1 x_5 + x_1 x_7 + x_4 x_5 + x_4 x_7 + x_5 x_7.
\end{equation*}
By convention, we set $e_d(S) = 0$ whenever $|S| > d$.

For $k \geq 0$,
in this paper we use the term {\em partition of $k$} to mean a weakly decreasing sequence
$\lambda = (\lambda_1 \geq \cdots \geq \lambda_s)$ of {\bf nonnegative} integers
with $\lambda_1 + \cdots + \lambda_s = k$.  We write $|\lambda| = k$ or $\lambda \vdash k$
to mean that $\lambda$ is a partition of $k$ and call $s$ the number of {\em parts} of 
$\lambda = (\lambda_1, \dots, \lambda_s)$.
We also write $\ell(\lambda)$ for the number of {\em nonzero} parts of $\lambda$.
For example, if $\lambda = (4,2,2,0,0)$ we have $\lambda \vdash 8,$ the partition $\lambda$
has $5$ parts, and $\ell(\lambda) = 3$.

  \begin{defn} (Griffin \cite{Griffin})
  Let $k \leq n$ be nonnegative integers and let $\lambda = (\lambda_1 \geq \cdots \geq \lambda_{s} \geq 0)$
be a partition of $k$ with $s$ parts.
Write $\lambda' = (\lambda'_1 \geq \cdots \geq \lambda'_n \geq 0)$ for the conjugate partition of 
$\lambda$, padded with trailing zeros to be of length $n$.

  Let $I_{n,\lambda} \subseteq \QQ[\xx_n]$ be the ideal
  \begin{equation}
  I_{n,\lambda} = \langle x_1^{s}, \dots, x_n^{s} \rangle + 
  \langle e_d(S) \,:\, S \subseteq [n] \text{ and } d > |S| - \lambda'_n - \lambda'_{n-1} - \cdots - \lambda'_{n-|S|+1} \rangle
  \end{equation}
  and let $R_{n,\lambda} := \QQ[\xx_n]/I_{n,\lambda}$ be the associated quotient ring
  \footnote{Our notation differs from that of Griffin \cite{Griffin}. He considers partitions to only 
  have positive parts and includes $s \geq \ell(\lambda)$ as a parameter, so that his objects 
  are denoted $I_{n,\lambda,s}$ and $R_{n,\lambda,s}$. Griffen uses the symbol $I_{n,\lambda}$ to denote
  the ideal $\langle e_d(S) \,:\, S \subseteq [n] \text{ and } d > |S| - \lambda'_n - \lambda'_{n-1} - \cdots - \lambda'_{n-|S|+1} \rangle$ and $R_{n,\lambda}$ to denote the corresponding quotient - this is the `limit' as $s \rightarrow \infty$ of
  his $I_{n,\lambda,s}$ and $R_{n,\lambda,s}$.}.
  \end{defn}
  
  As an example, suppose $n = 9$ and $\lambda = (3,2,2,0)$ so that $k = 7$ and $s = 4$.
  The conjugate partition $\lambda' = (\lambda'_1, \dots, \lambda'_9)$ is given by
  $(3,3,1,0,0,0,0,0,0)$.
  The ideal $I_{9,\lambda} \subseteq \QQ[\xx_9]$ is generated by $x_1^4, \dots, x_9^4$ together with
  the polynomials
  \begin{align*}
  &e_i(S) \quad \quad S \subseteq [9] \quad \quad |S| = 9 \quad \quad i = 9,8,7,6,5,5,3,  \\
  &e_j(T) \quad \quad T \subseteq [9] \quad \quad |T| = 8 \quad \quad j = 8,7,6,5, \\
  &e_{d}(U) \quad \quad U \subseteq [9] \quad \quad |U| = 7 \quad \quad d = 7,6.\\
  \end{align*}
  Griffin's rings $R_{n,\lambda}$  generalize several important classes of quotient rings 
  in algebraic combinatorics.
  \begin{itemize}
  \item When $k = s = n$ and $\lambda = (1^n)$, the ideal $I_{n,\lambda}$ is generated by the 
  $n$ elementary symmetric polynomials $e_1(\xx_n), e_2(\xx_n), \dots, e_n(\xx_n)$ in the full variable set 
  $\{x_1, \dots, x_n \}$
  and $R_{n,\lambda}$ is the classical {\em coinvariant ring} 
  \begin{equation}
  R_n := \QQ[\xx_n]/ \langle e_1(\xx_n), e_2(\xx_n), \dots, e_n(\xx_n) \rangle
  \end{equation}
  attached to the symmetric group $\symm_n$.
  The ring $R_{n,\lambda}$ presents the cohomology of the complete flag variety of type A$_{n-1}$.
  \item When $k = n$ and $\lambda \vdash n$ is arbitrary,
  the ring $R_{n,\lambda}$ is the {\em Tanisaki quotient} studied
  by Tanisaki \cite{Tanisaki} and Garsia-Procesi \cite{GP} which presents the cohomology of the
  {\em Springer fiber} $\mathcal{B}_{\lambda}$ attached to the partition $\lambda$. 
  \item When $\lambda = (1^k, 0^{s-k})$ has all parts $\leq 1$, the rings $R_{n,\lambda}$ were 
  introduced by Haglund, Rhoades, and Shimozono \cite{HRS} to give a representation-theoretic model for
  the
  Haglund-Remmel-Wilson {\em Delta Conjecture} \cite{HRW}. 
 Pawlowski-Rhoades proved that these rings present the cohomology
the moduli space of 
$n$-tuples of lines  $(\ell_1, \dots, \ell_n)$ in $\CC^{s}$
such that the composite linear map
\begin{equation}
\ell_1 \oplus \cdots \oplus \ell_n \rightarrow \CC^s \twoheadrightarrow \CC^k
\end{equation}
given by addition $(v_1, \dots, v_n) \mapsto v_1 + \cdots + v_n$ followed by projection onto the first $k$
coordinates is a surjection
 \cite{PR}.
  \end{itemize}

  The symmetric group $\symm_n$ acts on $\QQ[\xx_n]$ by subscript permutation.
  The ideals $I_{n,\lambda}$ are graded and $\symm_n$-stable, so $R_{n,\lambda}$ is a graded
  $\symm_n$-module.
  Generalizing results from \cite{GP, HRS}, Griffin calculated \cite{Griffin} the graded $\symm_n$-isomorphism
  type of $R_{n,\lambda}$.
  It is an open problem to find a variety $X_{n,\lambda}$ whose cohomology is presented by $R_{n,\lambda}$,
  but Griffin proved that $R_{n,\lambda}$ is the coordinate ring of a scheme-theoretic intersection
  arising from {\em rank varieties} \cite{Griffin}.

In this paper we study the rings $R_{n,\lambda}$ as graded $\QQ$-vector spaces.  In the special case
$k = s = n$ and $\lambda = (1^n)$, the classical coinvariant ring $R_n$
has a number of interesting bases which are important for different reasons.
Perhaps the simplest of these was discovered by E. Artin \cite{Artin}, who used Galois Theory to prove that 
the family of `sub-staircase monomials'
\begin{equation}
\{ x_1^{c_1} x_2^{c_2} \cdots x_n^{c_n} \,:\, 0 \leq c_i < n - i \}
\end{equation}
descends to a basis for $R_n$.
Extending earlier results of \cite{GP, HRS}, Griffin discovered the appropriate
generalization of `sub-staircase' to obtain a monomial basis
of $R_{n,\lambda}$; his result is quoted in Theorem~\ref{griffin-basis-theorem} below. 

Our main goal in this paper is to describe the {\em harmonic space} of the quotient ring $R_{n,\lambda}$
and so derive a {\em harmonic basis} of this quotient ring.
In order to motivate harmonic spaces and bases, we recall some  technical issues
that  arise in the study of  quotient rings.

Let  $I \subseteq \QQ[\xx_n]$
be any homogeneous ideal with  quotient ring $R = \QQ[\xx_n]/I$.
In algebraic combinatorics, 
one is often interested in calculating algebraic invariants of $R$ such as its  dimension or Hilbert
series.
A frequent impediment to computing these invariants is that, given $f \in \QQ[\xx_n]$, it can be difficult
to decide whether $f + I = 0$ in $R$.
Harmonic spaces can be used to replace quotients with subspaces, circumventing this problem.

For $f = f(x_1, \dots, x_n) \in \QQ[\xx_n]$, let $\partial f := f(\partial/\partial x_1, \dots, \partial/\partial x_n)$
be the differential operator on $\QQ[\xx_n]$ obtained by replacing each $x_i$ appearing in $f$ with the 
partial derivative $\partial/\partial x_i$.
The ring  $\QQ[\xx_n]$ acts on itself by
\begin{equation}
f \odot g := (\partial f)(g) \quad \text{for all $f, g \in \QQ[\xx_n]$.}
\end{equation}
That is, the polynomial $f \odot g$ is obtained by first turning $f$ into a differential operator $\partial f$,
and then applying $\partial f$ to $g$.

For $f, g \in \QQ[\xx_n]$, we define a number $\langle f, g \rangle \in \QQ$ by
\begin{equation}
\langle f, g \rangle := \text{constant term of $f \odot g$}.
\end{equation}
Given two monomials $x_1^{a_1} \cdots x_n^{a_n}$ and $x_1^{b_1} \cdots x_n^{b_n}$, it follows directly that
\begin{equation}
\langle x_1^{a_1} \cdots x_n^{a_n}, x_1^{b_1} \cdots x_n^{b_n} \rangle = \begin{cases}
a_1! \cdots a_n! & \text{if $a_i = b_i$ for all $i$,} \\
0 & \text{otherwise,}
\end{cases}
\end{equation}
so that $\langle -, - \rangle$ is an inner product on $\QQ[\xx_n]$ for which the  degree 
grading $\QQ[\xx_n] = \bigoplus_{d \geq 0} \QQ[\xx_n]_d$ is an orthogonal decomposition.

For a homogeneous ideal $I \subseteq \QQ[\xx_n]$, the {\em harmonic space} $V$ of $I$
is the graded subspace of $\QQ[\xx_n]$ given by
\begin{equation}
V = I^{\perp} = \{ g \in \QQ[\xx_n] \,:\, \langle f, g \rangle = 0 \text{ for all $f \in I$} \}.
\end{equation}
Writing $R = \QQ[\xx_n]/I$, standard results of linear algebra imply that
$\QQ[\xx_n] = V \oplus I$ so that any vector space basis for $V$ projects onto a basis of $R$.
Any basis of $V$ (and its image basis in $R$) is called a {\em harmonic basis}.
If the ideal $I$ is $\symm_n$-invariant, the $\symm_n$-invariance of the inner product
$\langle -, - \rangle$ furnishes an isomorphism of graded $\symm_n$-modules $R \cong V$.
The harmonic space $V$ therefore permits the study of the quotient ring $R$ without the computational
issues inherent in cosets.

\begin{defn}
Let $V_{n,\lambda} \subseteq \QQ[\xx_n]$ be the harmonic space of $I_{n,\lambda}$.
\end{defn}

We have an isomorphism of graded $\symm_n$-modules $R_{n,\lambda} \cong V_{n,\lambda}$
and any basis for $V_{n,\lambda}$ descends to a basis for $R_{n,\lambda}$.
In the classical case $k = s = n$ and $\lambda = (1, \dots, 1)$ so that 
$R_{n,\lambda} = R_n$, the harmonic space $V_{n,\lambda}$ has the following description.

Recall that the {\em Vandermonde determinant} $\delta_n \in \QQ[\xx_n]$ is the polynomial
\begin{equation}
\delta_n := \prod_{1 \leq i < j \leq n} (x_i - x_j).
\end{equation}
The harmonic space $V_n \subseteq \QQ[\xx_n]$ corresponding
to $R_n$ is generated by $\delta_n$ as a $\QQ[\xx_n]$-module.
More explicitly, the space $V_n$ is the smallest subspace of $\QQ[\xx_n]$ containing $\delta_n$
which is closed under the partial derivatives $\partial/\partial x_1, \dots, \partial/\partial x_n$.
A harmonic basis of $R_n$ is given by applying sub-staircase monomials (as differential operators)
to $\delta_n$:
\begin{equation}
\{ (x_1^{c_1} \cdots x_n^{c_n}) \odot \delta_n \,:\, 0 \leq c_i \leq n - i \}.
\end{equation}
In the Springer fiber case $k = n$ with $\lambda$ arbitrary,
the harmonic space $V_{n,\lambda}$ was described by N. Bergeron and Garsia \cite{BG} 
using `partial Vandermonde' polynomials.

In order to describe our results, we need one more definition.
Given $k \leq n$ and a partition 
$\lambda = (\lambda_1 \geq \cdots \geq \lambda_s)$ of $k$, let $\OP_{n,\lambda}$ 
be the collection of length $s$ sequences $\sigma = (B_1 \mid \cdots \mid B_s)$ of subsets of $[n]$
such that 
\begin{itemize}
\item we have a disjoint union decomposition $[n] = B_1 \sqcup \cdots \sqcup B_s$, and
\item the set $B_i$ has at least $\lambda_i$ elements.
\end{itemize}
We refer to elements $\sigma \in \OP_{n,\lambda}$ as {\em ordered set partitions},
even though some of the sets $B_i$ may be empty when the partition $\lambda$ has trailing zeros.
When $k = s = n$ and $\lambda = (1^n)$, we have an identification $\OP_{n,\lambda} = \symm_n$
of ordered set partitions and permutations.

\begin{itemize}
\item  We generalize work of Rhoades-Wilson \cite{RW2} to define a coinversion statistic $\coinv(\sigma)$
on $\OP_{n,\lambda}$ and an extension $\code(\sigma)$ of the Lehmer code of a permutation 
to $\OP_{n,\lambda}$ (Section~\ref{Coinversion}).
We show that the map $\sigma \mapsto \code(\sigma)$ bijects $\OP_{n,\lambda}$ with a family 
of sequences $\CCC_{n,\lambda}$ determined by 
$n$ and $\lambda$ (Theorem~\ref{code-is-bijection}).
\item We show that the Hilbert series of $R_{n,\lambda}$ 
is the generating function of the statistic $\coinv$ on $\OP_{n,\lambda}$
(Corollary~\ref{hilbert-series}).
\item We describe a generating set for the 
harmonic space $V_{n,\lambda}$ as a $\QQ[\xx_n]$-module
(Theorem~\ref{generating-harmonic-set}) and give an explicit harmonic basis
$\{ \delta_{\sigma} \,:\, \sigma \in \OP_{n,\lambda} \}$ of $R_{n,\lambda}$ indexed by ordered set partitions
in
$\OP_{n,\lambda}$
(Theorem~\ref{harmonic-basis}).
\item We show that the lexicographical leading monomials of the harmonic polynomials in $V_{n,\lambda}$
are precisely those with exponent sequences lying in $\CCC_{n,\lambda}$
(Theorem~\ref{leading-term-theorem}).
\end{itemize}

The rest of the paper is organized as follows.
In {\bf Section~\ref{Background}} we give background on partitions, tableaux, and ordered set partitions.
In {\bf Section~\ref{Coinversion}} we describe our new coinversion statistic on
$\OP_{n,\lambda}$ as well as its associated coinversion code.
We use an insertion algorithm to describe the possible coinversion codes of elements of $\OP_{n,\lambda}$.
In {\bf Section~\ref{Harmonic}} we study the harmonic space $V_{n,\lambda}$ and, in particular,
give a harmonic basis for $V_{n,\lambda}$ (or $R_{n,\lambda}$) indexed by 
$\OP_{n,\lambda}$. 
We also prove a conjecture of A. T. Wilson (personal communication) by showing that a certain family
$R_{n,k,s}$ of graded rings coincides with a special case of Griffin's rings $R_{n,\lambda}$.
We close in {\bf Section~\ref{Conclusion}} by proposing a connection between this work and superspace.

\section{Background}
\label{Background}

\subsection{Partitions and tableaux}
Given $k \geq 0$, a {\em partition} of $n$ is a weakly decreasing sequence
$\lambda = (\lambda_1 \geq \cdots \geq \lambda_s)$ of nonnegative integers satisfying
$\lambda_1 + \cdots + \lambda_s = k$.  
In particular, we  allow trailing zeros in our partitions.  Let $\ell(\lambda)$ denote the number of nonzero
parts of a partition $\lambda$.
We use the notation $\lambda \vdash k$ to indicate that $\lambda$ is a partition of $k$.

The {\em Young diagram} of a partition $\lambda$ consists of $\lambda_i$ left-justified boxes in row $i$.
For example, the Young diagram of $(4,2,1) \vdash 7$ is shown below.
\begin{equation*}
\begin{Young}
 & & & \cr
 & \cr
 \cr
 \end{Young}
\end{equation*}
Observe that trailing zeros have no effect on Young diagrams, so this would also be the Young diagram
of the partition $(4,2,1,0,0)$.
The {\em conjugate} $\lambda'$ of a partition $\lambda$ is obtained by reflecting its Young diagram
across the main diagonal; we have $\lambda' = (3,2,1,1)$ in this case.

Let $\lambda$ be a partition.
A {\em tableau} $T$ of shape $\lambda$ is a filling $T: \lambda \rightarrow \{1, 2, \dots \}$
of the boxes of $\lambda$ with positive integers.
A tableau $T$ is  {\em column strict} if its entries strictly increase going down columns and
{\em injective} if its entries are distinct.
We write $\Inj(\lambda, \leq n)$ for the family of injective and column strict tableaux of shape $\lambda$ whose
 entries are bounded above by $n$.
 An example tableau in $\Inj((4,2,1), \leq 9)$ is shown below; observe that the number 8 does not appear 
 in this tableau.

\begin{equation*}
\begin{young}
2 & 1 & 3 & 9 \cr
5 & 4 \cr
6
\end{young}
\end{equation*}

\subsection{Ordered set partitions}
A {\em (weak) ordered set partition} of $[n]$ is a sequence $\sigma = (B_1 \mid \cdots \mid B_s)$ of 
(possibly empty)
subsets of $[n]$ such that we have a disjoint union decomposition
$[n] = B_1 \sqcup \cdots \sqcup B_s$.  We say that $\sigma$ has {\em $s$ blocks}.
As an example,
\begin{equation*}
\sigma = ( 1, \, 3, \, 5, \, 9 \, \mid \, 6, \, 7, \, 8, \, 10, \, 14 \, \mid  \, 2, \, 12, \, 15 \, \mid  4, \, 13 \, \mid \, \varnothing \, \mid
\, 11, \, 16 )
\end{equation*}
is an ordered set partition of $[16]$ with 6 blocks.

Let $\lambda = (\lambda_1 \geq \cdots \geq \lambda_s)$ be a partition. 
As described in the introduction, 
we write $\OP_{n,\lambda}$ for the family of all ordered set partitions $\sigma = (B_1 \mid \cdots \mid B_s)$ 
of $[n]$ with $s$ blocks such that $B_i$ has at least $\lambda_i$ elements, for all $1 \leq i \leq s$.
If $\lambda = (3,3,2,2,0,0)$, the ordered set partition $\sigma$ above lies in $\OP_{16, \lambda}$.

It will be convenient to visualize elements of $\OP_{n,\lambda}$ in terms of the following
{\em container diagrams}.  Given $\lambda = (\lambda_1 \geq \cdots \geq \lambda_s)$, we first draw
(from left to right) $s$ columns of top-justified boxes of height $\lambda_i$.
(These boxes are called the {\em container}.)
For $\sigma = (B_1 \mid \cdots \mid B_s) \in \OP_{n,\lambda}$, we fill the $i^{th}$ column with the entries
of $B_i$, increasing from bottom to top.
Our example ordered set partition $\sigma \in \OP_{16,(3,3,2,2,0,0)}$ has the following container diagram,
with column numberings corresponding to block indices.
\begin{equation}
\label{example-sigma}
\begin{Young}
 ,& ,14 & ,&,  &,  &,16  \cr
 ,9 & ,10 & ,15 &, &, \varnothing  &, 11 \cr
 5 & 8 & 12 & 13 &, &,   \cr
 3 & 7 & 2 & 4 &, &, \cr
1   & 6 &, &, &, &,  \cr 
, \cr
,1 & ,2 & ,3 & ,4 & ,5 & ,6 
\end{Young}
\end{equation}
Empty blocks in ordered set partitions give rise to empty columns in container diagrams.
The container diagram above has a single empty column, decorated with the placeholder $\varnothing$.
The condition $\sigma \in \OP_{n,\lambda}$ corresponds to the container of boxes being completely filled with
numbers.
The numbers appearing outsider of the container (9, 10, 11, 14, 15, and 16 in our example) are called
{\em floating}.

\section{Coinversion codes for $\OP_{n,\lambda}$}
\label{Coinversion}

\subsection{Coinversions in ordered set partitions}
One variant of the {\em Lehmer code} of a permutation $\pi = \pi_1 \dots \pi_n \in \symm_n$
is given by the sequence $(c_1, \dots, c_n)$ where
\begin{equation}
c_i = | \{ i < j \,:\, \pi_i < \pi_j \} |.
\end{equation}
The sum of this sequence $c_1 + \cdots + c_n$ counts the total number of {\em coinversions} 
(i.e. non-inversions)
of $\pi$.
We extend this definition from permutations to ordered set partitions as follows.

Let $\lambda = (\lambda_1 \geq \cdots \geq \lambda_s) \vdash k$ be a partition, let $n \geq k$, and 
let $\sigma \in \OP_{n,\lambda}$. We think of $\sigma$ in terms of its container diagram.
For $1 \leq i < j \leq n$, we say that $(i,j)$ is a {\em coinversion} of $\sigma$ when one of the following 
three conditions hold:
\begin{itemize}
\item $i$ is floating, $j$ is to the right of $i$ in $\sigma$, $j$ is at the top of its container, and $i < j$,
\item $i$ is not floating, $j$ is to the right of $i$ in $\sigma$, $i$ and $j$ are in the same row of $\sigma$,
and $i < j$, or
\item $i$ is not floating, $j$ is to the left of $i$ in $\sigma$, $j$ is one row below $i$ in $\sigma$, and $i < j$.
\end{itemize}
The last two conditions may be depicted schematically as 
\begin{equation*}
\begin{Young}
i &, &, \cdots &, &  j 
\end{Young}  \hspace{0.2in}   \text{and}  \hspace{0.2in}
\begin{Young}
 &  ,&, \cdots &, &i \cr
 j
\end{Young}
\end{equation*}

\begin{remark}
The conditions defining coinversions 
for non-floating indices are the same as those used to define the statistic $\dinv$ which arises
in the Haglund-Haiman-Loehr monomial expansion of the modified Macdonald polynomials \cite{HHL}.
\end{remark}

For $1 \leq i \leq n$ we define a number $c_i \geq 0$ by
\begin{equation}
c_i := \begin{cases}
| \{ i < j \,:\, \text{$(i,j)$ is a coinversion of $\sigma$} \} |
& \text{if $i$ is not floating} \\
| \{ i < j \,:\, \text{$(i,j)$ is a coinversion of $\sigma$} \} | + (p-1) 
& \text{if $i$ is floating in the $p^{th}$ block of $\sigma$.}
\end{cases}
\end{equation}
The {\em coinversion code} of $\sigma$ is given by
\begin{equation}
\code(\sigma) := (c_1, \dots, c_n)
\end{equation}
and the {\em coinversion number} of $\sigma$ is given by
\begin{equation}
\coinv(\sigma) := c_1 + \cdots + c_n.
\end{equation}
Rhoades and Wilson \cite{RW2} defined $\code(\sigma)$ in the special case
where $\lambda = (1^k)$.

As an example of these concepts, consider the ordered set partition
$\sigma \in \OP_{16,(3,3,2,2,0,0)}$ appearing in \eqref{example-sigma}.
Let $(c_1, \dots, c_{16})$ be the sequence $\code(\sigma)$.
The entry $2$ forms coinversions with $4$ and $6$, so that
$c_2 = 2$.
The entry 10 is floating in column 2, and forms coinversions with 12 and 13
so that $c_{10} = 2 + (2-1) = 3$.  We have
\begin{equation*}
\code(\sigma) = (c_1, \dots, c_{16}) = (1, 2, 2, 1, 3, 0, 0, 2, 2, 3, 5, 1, 0, 1, 2, 5).
\end{equation*}
Adding this sequence yields $\coinv(\sigma) = 30$.

\subsection{The family of sequences $\CCC_{n,\lambda}$}
The map $\sigma \mapsto \code(\sigma)$ assigning $\sigma \in \OP_{n,\lambda}$
to its coinversion code will turn out to be an injection.
In order to describe the image of this map, we recall that a {\em shuffle} of two sequences
$(a_1, \dots, a_p)$ and $(b_1, \dots, b_q)$ is an interleaving $(c_1, \dots, c_{p+q})$ of these 
sequences which preserves the relative order of the $a$'s and the $b$'s. 
A shuffle of any finite number of sequences may be defined analogously (or inductively).

Let $k \leq n$ be positive integers,
let $\lambda = (\lambda_1 \geq \cdots \geq \lambda_s) \vdash k$ be a partition with $s$ nonnegative parts, 
and write the conjugate of $\lambda$ as 
$(\lambda_1' \geq \cdots \geq \lambda_k')$.
We define $\CCC_{n,\lambda}$ to be the family of length $n$ sequences $(c_1, \dots, c_n)$ of nonnegative 
integers which are componentwise $\leq$ some shuffle of the $k+1$ (possibly empty) sequences
\begin{equation*}
(\lambda'_1 - 1, \lambda'_1 - 2, \dots, 1, 0), \dots, (\lambda'_k - 1, \lambda'_k - 2, \dots, 1, 0), \text{ and }
(s-1, s-1, \dots, s-1),
\end{equation*}
where the final sequence has $n-k$ copies of $s-1$.

Continuing our running example of $n = 16$ and $\lambda = (3,3,2,2,0,0) \vdash 8$,
the nonzero parts of $\lambda'$ are $(4,4,2)$ so that $\CCC_{n,\lambda}$ consists of all length
16 sequences $(c_1, \dots, c_{16})$ of nonnegative integers which are componentwise $\leq$ 
some shuffle of the  sequences
\begin{equation*}
(3,2,1,0), \, (3,2,1,0), \, (1,0), \text{ and } (5,5,5,5,5,5).
\end{equation*}

The sequence family $\CCC_{n,\lambda}$ was introduced by Haglund-Rhoades-Shimozono \cite{HRS} in the case
$\lambda_1 \leq 1$ and by 
Griffin \cite{Griffin} for general $\lambda$.
Griffin proved that the monomials in $\QQ[\xx_n]$ whose exponent sequences lie in $\CCC_{n,\lambda}$ descend
to a basis of the ring $R_{n,\lambda}$.

\begin{theorem} (Griffin \cite{Griffin})
\label{griffin-basis-theorem}
Let $k \leq n$ be positive integers and let $\lambda = (\lambda_1 \geq \cdots \geq \lambda_s) \vdash k$.
The set of monomials
\begin{equation}
\{ x_1^{c_1} \cdots x_n^{c_n} \,:\, (c_1, \dots, c_n) \in \CCC_{n,\lambda} \}
\end{equation}
descends to a vector space basis of $R_{n,\lambda}$.
\end{theorem}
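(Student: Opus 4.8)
The plan is to show that the monomials $\{x_1^{c_1}\cdots x_n^{c_n} : (c_1,\dots,c_n)\in\CCC_{n,\lambda}\}$ both span $R_{n,\lambda}$ and are linearly independent there; since this set is finite, it suffices to prove the spanning statement, which gives $\dim_\QQ R_{n,\lambda}\le|\CCC_{n,\lambda}|$, together with the matching lower bound $\dim_\QQ R_{n,\lambda}\ge|\CCC_{n,\lambda}|$. One could instead establish linear independence directly by producing a dual family inside the harmonic space $V_{n,\lambda}$, but since that is essentially the content of the later sections of the paper, invoking it here would be circular; I therefore split the argument into a ``soft'' spanning bound and a ``hard'' dimension bound.

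\emph{Upper bound (spanning).} Equip the monomials of $\QQ[\xx_n]$ with the lexicographic order induced by $x_1>x_2>\cdots>x_n$. It is enough to prove that every monomial $x_1^{a_1}\cdots x_n^{a_n}$ whose exponent vector $(a_1,\dots,a_n)$ does not lie in $\CCC_{n,\lambda}$ is congruent modulo $I_{n,\lambda}$ to a $\QQ$-linear combination of lexicographically smaller monomials; downward induction on the order then rewrites every monomial in terms of the $\CCC_{n,\lambda}$-monomials. If some $a_i\ge s$ one reduces using $x_i^{s}\in I_{n,\lambda}$, so assume $a_i<s$ for all $i$. The combinatorial crux is to convert the failure of $(a_1,\dots,a_n)\in\CCC_{n,\lambda}$ --- i.e.\ that $(a_1,\dots,a_n)$ is not componentwise dominated by any shuffle of $(\lambda'_1-1,\dots,1,0),\dots,(\lambda'_k-1,\dots,1,0)$ and $(s-1,\dots,s-1)$ --- into a Hall-type obstruction: there exist a subset $S\subseteq[n]$ and a degree $d$ with $d>|S|-\lambda'_n-\lambda'_{n-1}-\cdots-\lambda'_{n-|S|+1}$ such that the $d$ smallest elements of $S$ all lie in the support $\{i:a_i\ge1\}$. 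Then $e_d(S)\in I_{n,\lambda}$, its lexicographic leading monomial (the squarefree monomial on those $d$ smallest elements of $S$) divides $x_1^{a_1}\cdots x_n^{a_n}$, and multiplying $e_d(S)$ by the complementary monomial produces an element of $I_{n,\lambda}$ with leading monomial exactly $x_1^{a_1}\cdots x_n^{a_n}$ and all other terms lexicographically smaller; this is the required reduction. The extraction of $S$ and $d$ is a marriage/flow argument of exactly the sort carried out by Garsia--Procesi \cite{GP} in the Tanisaki case $k=n$ and by Haglund--Rhoades--Shimozono \cite{HRS} when $\lambda_1\le1$, and it simultaneously shows that the chosen generators of $I_{n,\lambda}$ form a Gr\"obner basis whose standard monomials are precisely $\{x_1^{c_1}\cdots x_n^{c_n}:(c_1,\dots,c_n)\in\CCC_{n,\lambda}\}$.

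\emph{Lower bound.} For $\dim_\QQ R_{n,\lambda}\ge|\CCC_{n,\lambda}|$ I would invoke Griffin's geometric model: $R_{n,\lambda}$ is the coordinate ring of an explicit scheme-theoretic intersection assembled from rank varieties \cite{Griffin}, and computing the length of that scheme --- equivalently, realizing $R_{n,\lambda}$ as an associated graded ring of the coordinate ring of a reduced finite point locus, or reading off Griffin's graded $\symm_n$-character of $R_{n,\lambda}$ at the trivial grading --- yields exactly $|\CCC_{n,\lambda}|$. A more self-contained alternative is a Garsia--Procesi-style short exact sequence relating $R_{n,\lambda}$ to rings of the same type with strictly smaller parameters, followed by induction; here it is convenient that $|\CCC_{n,\lambda}|$ also counts the ordered set partitions in $\OP_{n,\lambda}$ (cf.\ Theorem~\ref{code-is-bijection}), which recurse cleanly according to the block containing the letter $n$ and so match the exact sequence term by term. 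I expect this lower bound to be the main obstacle: the spanning step is essentially bookkeeping with the generators, whereas bounding the dimension from below genuinely requires external input, since a priori $I_{n,\lambda}$ could be larger than the ``expected'' ideal and collapse some of the candidate basis monomials.
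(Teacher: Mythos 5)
First, a point of reference: the paper offers no proof of this statement to compare against. Theorem~\ref{griffin-basis-theorem} is quoted from Griffin's thesis \cite{Griffin} and used as external input throughout. Your outline is a reasonable reconstruction of how results of this type are proved in \cite{GP}, \cite{HRS}, and \cite{Griffin} --- a lex-straightening argument for spanning plus an independent dimension lower bound --- but as written both halves defer exactly the steps that carry the content. In the spanning step, the claim that failure of the shuffle condition (with all $a_i<s$) always yields a pair $(S,d)$ with $e_d(S)\in I_{n,\lambda}$ whose lex-leading monomial divides $x_1^{a_1}\cdots x_n^{a_n}$ is the entire difficulty; it is not a routine marriage argument, and in \cite{HRS} its analogue (identifying the ``skip monomials'' generating the initial ideal) occupies the bulk of the Gr\"obner-theoretic work. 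Your parenthetical that this ``simultaneously shows the chosen generators form a Gr\"obner basis'' also overreaches: exhibiting, for each bad exponent vector, \emph{some} ideal element with that leading monomial only shows the standard monomials are contained among the $\CCC_{n,\lambda}$-monomials; concluding that the listed generators themselves form a Gr\"obner basis needs the matching dimension count or an S-pair computation you have not supplied. For the lower bound, invoking Griffin's geometric model or graded character is circular in spirit, since the theorem being proved is his; the Garsia--Procesi-style exact-sequence induction you mention is the honest self-contained route, but you would need to actually construct the surjections $R_{n,\lambda}\twoheadrightarrow R_{n-1,\lambda^{(i)}}$ (or their kernel filtration) matching the recursion \eqref{disjoint-union-decomposition}, which is again real work.

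Second, your circularity worry about the harmonic space is only partially founded, and resolving it gives a cleaner lower bound than either of your suggestions. Theorem~\ref{generating-harmonic-set} does cite Theorem~\ref{griffin-basis-theorem}, but its ingredients Lemma~\ref{delta-is-harmonic}, Observation~\ref{maxcode-leading}, Definition~\ref{partition-polynomial}, and Theorem~\ref{code-is-bijection} do not: they produce $|\OP_{n,\lambda}|=|\CCC_{n,\lambda}|$ harmonic polynomials $\delta_{\sigma}$ with distinct lex-leading monomials, giving $\dim R_{n,\lambda}=\dim V_{n,\lambda}\ge|\CCC_{n,\lambda}|$ with no input from Griffin's theorem, and Theorem~\ref{leading-term-theorem} (also independent of it) gives the reverse inequality. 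Since the leading exponent sequences of the $\delta_{\sigma}$ are exactly the elements of $\CCC_{n,\lambda}$, triangularity of the pairing $\langle -,-\rangle$ between the $\CCC_{n,\lambda}$-monomials and the $\delta_{\sigma}$ shows those monomials are linearly independent modulo $I_{n,\lambda}=V_{n,\lambda}^{\perp}$, and the dimension count then upgrades independence to a basis. This would replace both of your deferred steps by arguments the paper actually carries out in full.
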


We will prove that $\code$ is a bijection from $\OP_{n,\lambda}$ to $\CCC_{n,\lambda}$.
As a first step, we show that $\code(\sigma) \in \CCC_{n,\lambda}$ for any $\sigma \in \OP_{n,\lambda}$.

\begin{lemma}
\label{code-map-is-defined}
Let $k \leq n$ be positive integers and let $\lambda = (\lambda_1 \geq \cdots \geq \lambda_s) \vdash k$
be a partition of $k$. For any $\sigma \in \OP_{n,\lambda}$ we have
$\code(\sigma) \in \CCC_{n,\lambda}$.
\end{lemma}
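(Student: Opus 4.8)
The plan is to fix $\sigma = (B_1 \mid \cdots \mid B_s) \in \OP_{n,\lambda}$ and produce an explicit shuffle of the $k+1$ prescribed sequences that dominates $\code(\sigma)$ componentwise. The natural way to do this is to partition the index set $[n] = \{1, \dots, n\}$ into $k+1$ classes according to the position each entry occupies in the container diagram: for each $r$ with $1 \le r \le k$, let the $r$-th class consist of those non-floating entries that sit in one of the $\lambda'_r$ many cells in row $r$ of the container (reading the diagram so that row $r$ has $\lambda'_r$ boxes), and let the last class be the set of $n-k$ floating entries. First I would check that this is genuinely a partition of $[n]$: since $\sigma \in \OP_{n,\lambda}$ the container is completely filled, so it contains exactly $|\lambda| = k$ non-floating entries distributed as $\lambda'_r$ per row, and the remaining $n-k$ entries are floating.

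Next I would bound $c_i$ on each class. For an entry $i$ in row $r$ of the container, the coinversions $(i,j)$ with $i<j$ come in two flavors by the definition: $j$ in the same row and to the right of $i$, or $j$ one row below and to the left of $i$. Reading the entries of row $r$ from left to right as $v_1, v_2, \dots, v_{\lambda'_r}$ (so $v_t$ occupies the $t$-th box), one shows exactly as in the classical permutation/$\dinv$ setting that the value $c_{v_t}$ is at most $\lambda'_r - t$: the same-row contribution is at most $\lambda'_r - t$ (the number of boxes strictly to the right of $v_t$ in its row), and adding the below-and-left contribution still cannot push the count past $\lambda'_r - t$ because each cell strictly to the right of $v_t$ in row $r$ "absorbs" at most one coinversion in total — this is the standard argument that in a fixed pair of adjacent rows the $\dinv$-type statistic of the top row is bounded by the staircase $(\lambda'_r - 1, \lambda'_r - 2, \dots, 1, 0)$ read in the left-to-right order of the top row. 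Thus, as $t$ ranges over $1, \dots, \lambda'_r$, the subsequence of $\code(\sigma)$ indexed (in increasing order of the entry values $v_1 < \cdots$? — careful here) by row $r$ is dominated, after a suitable reordering, by $(\lambda'_r - 1, \dots, 1, 0)$. For a floating entry $i$ in block $p$, the definition gives $c_i = |\{i < j : (i,j) \text{ a coinversion}\}| + (p-1)$; since $j$ must be to the right of $i$ and at the top of its container, and since every $j$ counted lies in a block with index $> p$ and there are $s$ blocks total, one gets $c_i \le (s - p) + (p-1) = s-1$, so each floating entry contributes at most $s-1$.

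The one real subtlety — and the step I expect to be the main obstacle — is the interleaving: membership in $\CCC_{n,\lambda}$ requires that $\code(\sigma)$ be dominated by a shuffle of the staircases and the constant sequence \emph{in the natural left-to-right order of the indices $1, 2, \dots, n$}, not merely that the multiset of values be dominated classwise. So after establishing the classwise bounds I would need to argue that the values $c_{v_1}, \dots, c_{v_{\lambda'_r}}$ can be matched to the staircase entries $\lambda'_r - 1, \dots, 0$ in a way that is monotone with respect to the ambient order on $[n]$. The cleanest route is: within row $r$, the entries increase from left to right is \emph{not} assumed, but the container-diagram convention does give structure — re-examine which of $v_1 < \cdots$ holds. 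Granting the needed monotonicity (entries in a fixed row, read left to right, and the bound $c_{v_t} \le \lambda'_r - t$ — if instead entries increase bottom-to-top and the row order is by value, one reindexes accordingly), one concatenates, for each index $i \in [n]$ in increasing order, the appropriate staircase-value or the constant $s-1$, and the resulting sequence is a shuffle of the $k+1$ sequences that dominates $\code(\sigma)$ componentwise, which is exactly the assertion $\code(\sigma) \in \CCC_{n,\lambda}$. I would handle the bookkeeping of this monotone matching carefully, since it is the only place where the precise diagram conventions of Section~\ref{Background} are load-bearing.
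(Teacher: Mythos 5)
There is a genuine gap, and it sits exactly at the point you flag as the ``main obstacle.'' Your classwise bound is stated with the wrong ordering: if $v_1,\dots,v_{\lambda'_r}$ are the entries of row $r$ read \emph{left to right by box position}, the inequality $c_{v_t}\le \lambda'_r-t$ is false. In the paper's running example $\sigma\in\OP_{16,(3,3,2,2,0,0)}$, the second container row reads $3,7,2,4$ from left to right; the entry $2$ sits in the third box, so your bound would give $c_2\le 1$, but $c_2=2$ (coinversions with $4$ in the same row and with $6$ below-left). Likewise $4$ sits in the fourth box, so your bound would give $c_4\le 0$, but $c_4=1$. Your heuristic that ``each cell strictly to the right absorbs at most one coinversion in total'' does not work, because the same-row coinversions and the below-left coinversions live in disjoint sets of columns ($q>p$ versus $q<p$), so they cannot offset one another.

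The correct statement, which is what the paper uses, orders each row \emph{by value}: the $j$-th smallest entry $w_j$ of row $r$ satisfies $c_{w_j}\le \lambda'_r-j$. The reason is an injection rather than an absorption argument: a below-left coinversion of $w_j$ comes from a cell of row $r+1$ in some column $q$ with entry $>w_j$, and since column entries increase from bottom to top, the cell of row $r$ in that same column $q$ carries an even larger entry; these row-$r$ cells lie to the left of $w_j$ and hence are not already counted as same-row coinversions. So the total number of coinversions of $w_j$ is at most the number of row-$r$ entries exceeding $w_j$, namely $\lambda'_r-j$. This value-ordering also dissolves the interleaving worry entirely: placing $(\lambda'_r-1,\dots,1,0)$ at the positions $w_1<w_2<\cdots<w_{\lambda'_r}$ (and $s-1$ at each floating position, where your bound $c_i\le (s-p)+(p-1)=s-1$ is correct) automatically produces each staircase as a subsequence in increasing index order, i.e.\ a genuine shuffle dominating $\code(\sigma)$ componentwise --- this is precisely the sequence the paper calls $\maxcode(\sigma)$. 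With the ordering corrected, your outline matches the paper's proof.
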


\begin{proof}
The $i^{th}$ row from the top of the container of $\sigma$ contains $\lambda'_i$ boxes.
It follows from the definition of coinversions that the $j^{th}$ smallest entry in this row forms at most
$\lambda'_i - j$ coinversions with other entries of $\sigma$.
Furthermore, if $t$ is any floating entry of $\sigma$, then $c_t \leq s-1$ by construction.
The entries in the $\lambda_1$ rows of $\sigma$, together with the $n-k$ floating entries, define 
a shuffle $(c'_1, \dots, c'_n)$ of the sequences
\begin{equation*}
(\lambda'_1 - 1, \lambda'_1 - 2, \dots, 1, 0), \dots, (\lambda'_k - 1, \lambda'_k - 2, \dots, 1, 0), \text{ and }
(\overbrace{s-1, s-1, \dots, s-1}^{n-k}),
\end{equation*}
such that we have the componentwise inequality
$\code(\sigma) \leq (c'_1, \dots, c'_n)$.

To see how this works, suppose $\sigma$ is as in \eqref{example-sigma}:
\begin{equation*}
\begin{Young}
 ,& ,&  ,& ,14 & ,&,  &,  &,16  \cr
 ,& ,& ,9 & ,10 & ,15 &, &, \varnothing  &, 11 \cr
,\sigma = & ,& 5 & 8 & 12 & 13 &, &,   \cr
,&, &  3 & 7 & 2 & 4 &, &, \cr
,& , & 1   & 6 &, &, &, &,  \cr 
\end{Young}
\end{equation*}
We use the container diagram of $\sigma$ to form a shuffle $(c'_1, \dots, c'_{16})$
of  the sequences 
\begin{equation*}
(3^{\bullet},2^{\bullet},1^{\bullet},0^{\bullet}), (3^{\circ},2^{\circ},1^{\circ},0^{\circ}), 
(1^{\square},0^{\square}), \text{ and } (5,5,5,5,5,5).
\end{equation*}
Here we label our sequences with decorations ($\bullet, \circ, \square,$ and unadorned) so that we 
can distinguish them when we perform our shuffle.
The shuffle $(c'_1, \dots, c'_{16})$ corresponding to $\sigma$ is:
\begin{equation*} (c'_1, \dots c'_{16}) := 
( 1^{\square} , 3^{\circ} , 2^{\circ} , 1^{\circ} , 3^{\bullet} , 0^{\square} , 0^{\circ} , 2^{\bullet} , 5 , 5 , 5 , 1^{\bullet} , 0^{\bullet} , 5 , 5 , 5 ).
\end{equation*}
The positions of the $\bullet$ entries are given by top row of the container
of $\sigma$ ($5 ,8, 12,$ and $13$).  The positions of the $\circ$ entries (2, 4, 3, and 7) are the middle row 
of the container and the positions of the $\square$ entries (1 and 6) are the bottom row of the container.
The unadorned entries (in positions 9, 10, 11, 14, 15, and 16) are the floating numbers.
The reader can verify the componentwise inequality $\code(\sigma) \leq (c'_1, \dots, c'_n)$.
\end{proof}

The shuffle $(c'_1, \dots, c'_n)$ constructed in the proof of Lemma~\ref{code-map-is-defined}
will be important in Section~\ref{Harmonic}, so we give it a name.

\begin{defn}
\label{maxcode-definition}
Let $\sigma \in \OP_{n,\lambda}$ for some partition 
$\lambda = (\lambda_1 \geq \cdots \geq \lambda_s)$ of $k$. 
 The shuffle $(c'_1, \dots, c'_n)$ of the sequences
 of the sequences
\begin{equation*}
(\lambda'_1 - 1, \lambda'_1 - 2, \dots, 1, 0), \dots, (\lambda'_k - 1, \lambda'_k - 2, \dots, 1, 0), \text{ and }
(\overbrace{s-1, s-1, \dots, s-1}^{n-k})
\end{equation*}
obtained by placing $(\lambda'_i - 1, \lambda'_i - 2, \dots, 1, 0)$ into the positions contained in the $i^{th}$
row from the top of the container of $\sigma$, and placing $s-1$ into all floating positions of $\sigma$,
will be referred to as $\maxcode(\sigma)$.
\end{defn}

For example, if 
$\sigma = ( 1, \, 3, \, 5, \, 9 \, \mid \, 6, \, 7, \, 8, \, 10, \, 14 \, \mid  \, 2, \, 12, \, 15 \, \mid  4, \, 13 \, \mid \, \varnothing \, \mid
\, 11, \, 16 ) \in \OP_{16,(3,3,2,2,0,0)}$ is the ordered set partition in the proof of Lemma~\ref{code-map-is-defined}
then
\begin{equation*}
\maxcode(\sigma) = (1,3,2,1,3,0,0,2,5,5,5,1,0,5,5,5).
\end{equation*}
The proof of Lemma~\ref{code-map-is-defined} gives the following result immediately.

\begin{lemma}
\label{code-less-than-max}
For any $\sigma \in \OP_{n,\lambda}$ we have the componentwise inequality $\code(\sigma) \leq \maxcode(\sigma)$.
\end{lemma}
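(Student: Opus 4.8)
The plan is to observe that the componentwise bound $\code(\sigma) \leq \maxcode(\sigma)$ is precisely what the argument buried inside the proof of Lemma~\ref{code-map-is-defined} already produces, once one matches the entries of $\maxcode(\sigma)$ against the coinversion counts $c_i$ position by position. So the proof will amount to unwinding two definitions and quoting two estimates.

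First I would spell out, from Definition~\ref{maxcode-definition}, what value $\maxcode(\sigma)$ assigns to each position $i \in [n]$. If $i$ is the $m^{th}$ smallest entry in the $r^{th}$ row from the top of the container of $\sigma$, then the entry of $\maxcode(\sigma)$ in position $i$ equals $\lambda'_r - m$: the decreasing run $(\lambda'_r - 1, \lambda'_r - 2, \dots, 1, 0)$ is read off against the entries of that row listed in increasing order, so its $m^{th}$ term $\lambda'_r - m$ lands on the $m^{th}$ smallest entry. If $i$ is floating, then the entry of $\maxcode(\sigma)$ in position $i$ is $s-1$.

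Next I would reuse the two estimates established in the proof of Lemma~\ref{code-map-is-defined}. For a non-floating entry $i$ sitting in the $r^{th}$ row of the container, the definition of coinversions forces $c_i \leq \lambda'_r - m$, where $m$ is the rank of $i$ among the entries of its row; this is where column strictness of the container filling is used, since any entry of row $r+1$ lying to the left of $i$ and exceeding $i$ sits directly below a strictly larger entry of row $r$ that also exceeds $i$, so coinversions of the second and third types together number at most the entries of row $r$ exceeding $i$. For a floating entry $i$ in block $p$, only the tops of the columns strictly to the right of block $p$ can form a coinversion with $i$, so the coinversion part of $c_i$ is at most $s-p$, whence $c_i \leq (s-p) + (p-1) = s-1$ by the definition of $\code$. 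In either case $c_i$ is bounded by exactly the entry of $\maxcode(\sigma)$ in position $i$, which is the desired componentwise inequality.

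I do not expect a genuine obstacle here: the statement is a bookkeeping consequence of Lemma~\ref{code-map-is-defined}. The only points requiring care are confirming that the alignment of the decreasing runs with the increasing row entries in Definition~\ref{maxcode-definition} reproduces the bound $\lambda'_r - m$ exactly, and that the $+(p-1)$ shift for a floating entry combines with the coinversion bound $s-p$ to land on $s-1$ on the nose rather than merely below it.
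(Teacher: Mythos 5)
Your proposal is correct and takes essentially the same route as the paper, which derives Lemma~\ref{code-less-than-max} as an immediate consequence of the proof of Lemma~\ref{code-map-is-defined}; you have simply made explicit the two bounds ($c_i \leq \lambda'_r - m$ for the $m^{th}$ smallest entry of row $r$, and $c_i \leq s-1$ for floating entries) and their alignment with the entries of $\maxcode(\sigma)$.
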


By Lemma~\ref{code-map-is-defined}, we have a well-defined map
\begin{equation}
\code: \OP_{n,\lambda} \rightarrow \CCC_{n,\lambda}
\end{equation}
which sends $\sigma \in \OP_{n,\lambda}$ to its coinversion code $\code(\sigma) = (c_1, \dots, c_n)$.
Our first main result states that this map is a bijection.

\begin{theorem}
\label{code-is-bijection}
Let $k \leq n$ be positive integers and let $\lambda = (\lambda_1 \geq \cdots \geq \lambda_s) \vdash k$
be a partition of $k$. The map 
$\code: \OP_{n,\lambda} \rightarrow \CCC_{n,\lambda}$ is a bijection.
\end{theorem}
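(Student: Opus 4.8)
The plan is to prove the theorem by exhibiting an explicit two-sided inverse $\Psi : \CCC_{n,\lambda} \to \OP_{n,\lambda}$ of $\code$, built from an insertion algorithm. Given $\mathbf c = (c_1, \dots, c_n) \in \CCC_{n,\lambda}$, I would build $\sigma = \Psi(\mathbf c)$ by processing the values $i = n, n-1, \dots, 2, 1$ in decreasing order, at each step placing value $i$ into a single cell of a growing partial container diagram, the cell being determined by $c_i$ together with the placement of the larger values $i+1, \dots, n$. Processing in decreasing order is the natural choice: the pairs $(i,j)$ with $j > i$ are exactly those that can contribute to $c_i$, so at the instant value $i$ is inserted the quantity $c_i$ is visible from the current diagram.

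The first task is to pin down the placement rule and prove that it is well defined --- that the insertion never gets stuck. The key simplification is that, being the smallest value inserted so far, value $i$ forms a coinversion with every larger value that it ``sees,'' so the contribution of $i$ to the code depends only on the cell it occupies and on the current diagram: a non-floating placement into the $r$-th row of the container contributes an amount controlled by the staircase strand $(\lambda'_r - 1, \dots, 1, 0)$ --- this is the mechanism behind the bound proved in Lemma~\ref{code-map-is-defined} --- while a floating placement into block $p$ contributes an amount in $\{p-1, \dots, s-1\}$ accounted for by the constant strand $(s-1, \dots, s-1)$. These two regimes are disjoint and, as the cell varies, the contribution realizes each admissible value exactly once; this disjointness mirrors exactly the decomposition of a dominating shuffle into its staircase strands and its constant strand. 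Using the hypothesis $\mathbf c \in \CCC_{n,\lambda}$, that is, that $\mathbf c$ is dominated by some shuffle of these strands, I would show by downward induction that at every step there is exactly one legal cell for value $i$ compatible with $c_i$ (and with leaving room below it for the smaller values still to be placed), so that $\Psi$ is a well-defined map into $\OP_{n,\lambda}$.

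It then remains to verify that $\code$ and $\Psi$ are mutually inverse. The structural fact that makes this routine is that inserting the remaining smaller values $i-1, \dots, 1$ fills only cells lying below the cells already occupied within each block --- columns grow downward and are never reordered --- so it changes neither the coinversions of the form $(i,j)$ with $j > i$ nor the block in which $i$ sits, and hence the value $c_i$ used to place $i$ agrees with the $i$-th entry of $\code(\sigma)$ for the finished $\sigma$; this gives $\code \circ \Psi = \mathrm{id}$. Running the argument backwards produces the extraction algorithm --- repeatedly delete the current smallest value and read off its code entry from its position --- which in particular shows $\code$ is injective, and a step-by-step comparison of insertion and extraction gives $\Psi \circ \code = \mathrm{id}$. (One could instead combine injectivity of $\code$ with an equality $|\CCC_{n,\lambda}| = |\OP_{n,\lambda}|$, but the insertion produces the bijection directly and keeps the argument self-contained.)

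I expect the first task to be the main obstacle. The coinversion statistic intertwines three incidence conditions --- the two $\dinv$-type conditions for non-floating entries and the ``top of its container'' condition for floating entries --- together with the $(p-1)$ correction, and one must show that the single integer $c_i$, read against the partial diagram, unambiguously decides whether value $i$ floats, into which block it enters, and at what height, in a way consistent with the smaller values still to come; the floating-versus-non-floating dichotomy for $i$ is not locally visible and is forced only by the global shuffle structure. This is exactly where the combinatorics defining $\CCC_{n,\lambda}$ has to be used in full: the shuffle $\maxcode(\sigma)$ of Definition~\ref{maxcode-definition} is, in effect, the shuffle that the insertion reconstructs as it runs, and showing the reconstruction is forced --- equivalently, that the inequality $\code(\sigma) \leq \maxcode(\sigma)$ can be ``inverted'' --- is the technical heart. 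Once the placement rule is in hand, the statistic check and the mutual-inverse property follow by straightforward downward induction.
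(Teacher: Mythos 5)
Your overall strategy---constructing an explicit insertion algorithm inverse to $\code$---is the same as the paper's, but you run the insertion in the opposite direction, and this is where the argument breaks. The paper inserts the values $1,2,\dots,n$ in \emph{increasing} order. That choice is essential: when a value enters a column it occupies the lowest empty box of that column (or floats above a full one), so its cell, its row from the top, and its floating status are permanent from that moment on, and $c_i$ equals the number of still-empty cells among those that could form a coinversion with $i$ (cells in its row to the right and one row below to the left; or, for a floater in block $p$, the quantity $p-1$ plus the number of container columns to its right whose top box is still empty), since every such empty cell is guaranteed to receive a larger value. The ``coinversion labels'' in the paper's algorithm record exactly these counts, and well-definedness of the inverse then follows by induction from the recursion \eqref{disjoint-union-decomposition} for $\CCC_{n,\lambda}$.

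Your decreasing-order insertion does not have this stability. When a smaller value later enters block $p$ it becomes the new minimum of $B_p$, so every previously placed member of $B_p$ shifts up one row relative to the container---possibly from ``top of its container'' to ``floating.'' Hence the final rows and floating statuses of the already-placed values $j>i$, and of $i$ itself, are undetermined at the moment $i$ is inserted; $c_i$ is \emph{not} visible from the current diagram, and both your claim that ``there is exactly one legal cell for value $i$ compatible with $c_i$'' and your claim that later insertions ``change neither the coinversions of the form $(i,j)$ with $j>i$ nor the block in which $i$ sits'' are false. Concretely, take $n=3$ and $\lambda=(1,1)$: the ordered set partitions $(1\,3\mid 2)$ and $(3\mid 1\,2)$ both have $c_3=0$, yet $3$ floats in block $1$ in the first and occupies the container box of block $1$ in the second, so no rule depending only on $c_3$ and the (empty) current diagram can place $3$ correctly. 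Appealing to ``the global shuffle structure'' does not repair this: a code in $\CCC_{n,\lambda}$ is typically dominated by many different shuffles, so the shuffle does not single out a placement either. Reversing the order of insertion and replacing ``the contribution of $i$ is visible'' by ``the contribution of $i$ is forced, because every empty coinversion cell will be filled by a larger value'' turns your outline into the paper's proof.
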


\begin{proof}
In order to prove that $\code$ is a bijection, we construct its inverse 
$\CCC_{n,\lambda} \rightarrow \OP_{n,\lambda}$.
Given $(c_1, \dots, c_n) \in \CCC_{n,\lambda}$, we define $\iota(c_1, \dots, c_n) \in \OP_{n,\lambda}$ by
the following insertion algorithm.

The element $\iota(c_1, \dots, c_n) \in \OP_{n,\lambda}$ 
will be constructed by starting with an empty container
of shape $\lambda$ and inserting the numbers $1, 2, \dots, n$ (in that order) to yield an element of 
$\OP_{n,\lambda}$. 
To describe what happens at a typical step of this insertion process, 
consider an ordered set partition $(B_1 \mid \cdots \mid B_s)$ with $s$ blocks.
We place the blocks $B_1, \dots, B_s$ in the container diagram corresponding to $\lambda$, from
left to right.
For example, if $\lambda = (3,3,2,2,0,0)$ and 
\begin{equation*}
(B_1 \mid \cdots \mid B_s) = ( 4 \, \mid \, 2, \, 3, \, 6 \, \mid \, 1 \, \mid \, \varnothing \, \mid \, \varnothing \, \mid \, 5)
\end{equation*}
our diagram is shown below.  In particular, the first, third, and fourth container columns from the left
remain unfilled.
\begin{equation*}
\begin{Young}
  , & , & , & , &, \varnothing & , 5 \cr
  & 6  &  &  &, &,   \cr
  & 3 & 1 &  &, &, \cr
 4  & 2  &, &, &, &,  \cr 
, \cr
,1 & ,3 & ,2 & ,0 & ,4 & ,5 
\end{Young}
\end{equation*}
We label the blocks of $(B_1 \mid \cdots \mid B_s)$ (or equivalently the columns of its container diagram)
with the $s$ distinct {\em coinversion labels} $0, 1, 2, \dots, s-1$ according to the following rules
\begin{enumerate}
\item any unfilled container column receives a smaller coinversion label than any filled container column,
\item the coinversion labels of the filled container columns increase from left to right,
\item given two unfilled container columns with different numbers of empty boxes, the column with more empty 
boxes has a smaller coinversion label,
\item the coinversion labels of unfilled container columns with the same numbers of empty boxes increase from
right to left.
\end{enumerate}
The coinversion labels are displayed below the columns of the container diagram.

The element $\iota(c_1, \dots, c_n) \in \OP_{n,\lambda}$ is defined as follows.
Starting with the `empty' container diagram corresponding to $(\varnothing \mid \cdots \mid \varnothing)$,
for each $i = 1, 2, \dots, n$, we insert $i$ into the unique column with coinversion label $c_i$ 
(updating the coinversion labels as we go).

The map $\iota$ is best understood with an example. If $\lambda = (3,3,2,2,0,0)$ as above (so that 
$s = 6$) and $n = 16$, we have
\begin{equation*}
(c_1, \dots, c_{16}) = (1, 2, 2, 1, 3, 0, 0, 2, 2, 3, 5, 1, 0, 1, 2, 5) \in \CCC_{n,\lambda}.
\end{equation*}
The insertion procedure defining $\iota(c_1, \dots, c_{16})$ proceeds as follows.
\begin{scriptsize}
\begin{equation*}
\begin{Young}
  , & , & , & , &, \varnothing & , \varnothing \cr
  &   &  &  &, &,   \cr
  &  &  &  &, &, \cr
   &   &, &, &, &,  \cr 
, \cr
,1 & ,0 & ,3 & ,2 & ,4 & ,5 
\end{Young}  \quad \quad 
\begin{Young}
  , & , & , & , &, \varnothing & , \varnothing \cr
  &   &  &  &, &,   \cr
  &  &  &  &, &, \cr
1   &   &, &, &, &,  \cr 
, \cr
,3 & ,0 & ,2 & ,1 & ,4 & ,5 
\end{Young}  \quad \quad
\begin{Young}
  , & , & , & , &, \varnothing & , \varnothing \cr
  &   &  &  &, &,   \cr
  &  & 2 &  &, &, \cr
1   &   &, &, &, &,  \cr 
, \cr
,2 & ,0 & ,3 & ,1 & ,4 & ,5 
\end{Young}  \quad \quad
\begin{Young}
  , & , & , & , &, \varnothing & , \varnothing \cr
  &   &  &  &, &,   \cr
3  &  & 2 &  &, &, \cr
1   &   &, &, &, &,  \cr 
, \cr
,3 & ,0 & ,2 & ,1 & ,4 & ,5 
\end{Young}  \quad \quad
\begin{Young}
  , & , & , & , &, \varnothing & , \varnothing \cr
  &   &  &  &, &,   \cr
3  &  & 2 & 4  &, &, \cr
1   &   &, &, &, &,  \cr 
, \cr
,3 & ,0 & ,2 & ,1 & ,4 & ,5 
\end{Young}  \quad \quad
\end{equation*}

\begin{equation*}
\begin{Young}
  , & , & , & , &, \varnothing & , \varnothing \cr
 5 &   &  &  &, &,   \cr
3  &  & 2 & 4  &, &, \cr
1   &   &, &, &, &,  \cr 
, \cr
,3 & ,0 & ,2 & ,1 & ,4 & ,5 
\end{Young}  \quad \quad
\begin{Young}
  , & , & , & , &, \varnothing & , \varnothing \cr
 5 &   &  &  &, &,   \cr
3  &  & 2 & 4  &, &, \cr
1   &  6 &, &, &, &,  \cr 
, \cr
,3 & ,0 & ,2 & ,1 & ,4 & ,5 
\end{Young}  \quad \quad
\begin{Young}
  , & , & , & , &, \varnothing & , \varnothing \cr
 5 &   &  &  &, &,   \cr
3  & 7 & 2 & 4  &, &, \cr
1   &  6 &, &, &, &,  \cr 
, \cr
,3 & ,2 & ,1 & ,0 & ,4 & ,5 
\end{Young}  \quad \quad
\begin{Young}
  , & , & , & , &, \varnothing & , \varnothing \cr
 5 & 8   &  &  &, &,   \cr
3  & 7 & 2 & 4  &, &, \cr
1   &  6 &, &, &, &,  \cr 
, \cr
,2 & ,3 & ,1 & ,0 & ,4 & ,5 
\end{Young}  \quad \quad
\begin{Young}
   , 9 & , & , & , &, \varnothing & , \varnothing \cr
 5 & 8   &  &  &, &,   \cr
3  & 7 & 2 & 4  &, &, \cr
1   &  6 &, &, &, &,  \cr 
, \cr
,2 & ,3 & ,1 & ,0 & ,4 & ,5 
\end{Young}  \quad \quad
\begin{Young}
   , 9 & , 10 & , & , &, \varnothing & , \varnothing \cr
 5 & 8   &  &  &, &,   \cr
3  & 7 & 2 & 4  &, &, \cr
1   &  6 &, &, &, &,  \cr 
, \cr
,2 & ,3 & ,1 & ,0 & ,4 & ,5 
\end{Young}  \quad \quad
\end{equation*}

\begin{equation*}
\begin{Young}
   , 9 & , 10 & , & , &, \varnothing & , 11 \cr
 5 & 8   &  &  &, &,   \cr
3  & 7 & 2 & 4  &, &, \cr
1   &  6 &, &, &, &,  \cr 
, \cr
,2 & ,3 & ,1 & ,0 & ,4 & ,5 
\end{Young}  \quad \quad
\begin{Young}
   , 9 & , 10 & , & , &, \varnothing & , 11 \cr
 5 & 8   &  12 &  &, &,   \cr
3  & 7 & 2 & 4  &, &, \cr
1   &  6 &, &, &, &,  \cr 
, \cr
,1 & ,2 & ,3 & ,0 & ,4 & ,5 
\end{Young}  \quad \quad
\begin{Young}
   , 9 & , 10 & , & , &, \varnothing & , 11 \cr
 5 & 8   &  12 & 13  &, &,   \cr
3  & 7 & 2 & 4  &, &, \cr
1   &  6 &, &, &, &,  \cr 
, \cr
,0 & ,1 & ,2 & ,3 & ,4 & ,5 
\end{Young}  \quad \quad
\begin{Young}
   , & , 14 &, &, &, &, \cr
   , 9 & , 10 & , & , &, \varnothing & , 11 \cr
 5 & 8   &  12 & 13  &, &,   \cr
3  & 7 & 2 & 4  &, &, \cr
1   &  6 &, &, &, &,  \cr 
, \cr
,0 & ,1 & ,2 & ,3 & ,4 & ,5 
\end{Young}  \quad \quad
\begin{Young}
   , & , 14 &, &, &, &, \cr
   , 9 & , 10 & , 15 & , &, \varnothing & , 11 \cr
 5 & 8   &  12 & 13  &, &,   \cr
3  & 7 & 2 & 4  &, &, \cr
1   &  6 &, &, &, &,  \cr 
, \cr
,0 & ,1 & ,2 & ,3 & ,4 & ,5 
\end{Young}  \quad \quad
\begin{Young}
   , & , 14 &, &, &, &, 16 \cr
   , 9 & , 10 & , 15 & , &, \varnothing & , 11 \cr
 5 & 8   &  12 & 13  &, &,   \cr
3  & 7 & 2 & 4  &, &, \cr
1   &  6 &, &, &, &,  \cr 
, \cr
,0 & ,1 & ,2 & ,3 & ,4 & ,5 
\end{Young}  \quad \quad
\end{equation*}
\end{scriptsize}
We conclude that $\iota(c_1, \dots, c_{16})$ is the element $\sigma \in \OP_{16,\lambda}$
displayed in \eqref{example-sigma}.

In order to verify that the map $\iota: \CCC_{n,\lambda} \rightarrow \OP_{n,\lambda}$ is well-defined,
we must show that the insertion procedure defining $\iota$ always fills every box in the container
corresponding to $\lambda$.
To do this, we induct on $n$.

Recall that $\ell(\lambda)$ is the number of nonzero parts in the partition 
$\lambda = (\lambda_1 \geq \cdots \geq \lambda_s)$ of $k$.  For $1 \leq i \leq \ell(\lambda)$, let $\lambda^{(i)}$ be
the partition obtained by sorting the sequence
$(\lambda_1, \dots, \lambda_i - 1, \dots \lambda_s)$ into weakly decreasing order.
The set $\CCC_{n,\lambda}$ satisfies the following disjoint union decomposition based on the
first entry of its sequences.

\begin{multline}
\label{disjoint-union-decomposition}
\CCC_{n,\lambda} = \bigsqcup_{i = 1}^{\ell(\lambda)}
\{ (i-1, c_2, \dots, c_n) \,:\, (c_2, \dots, c_n) \in \CCC_{n-1, \lambda^{(i)}} \} \, \, \, \sqcup \\
\bigsqcup_{j = \ell(\lambda) + 1}^{s}
\{ (j-1, c_2, \dots, c_n) \,:\, (c_2, \dots, c_n) \in \CCC_{n-1, \lambda} \} 
\end{multline}
Equation~\eqref{disjoint-union-decomposition}
is equivalent to a result of Griffin \cite[Lem. 3.8]{Griffin}.

Given $(c_1, c_2, \dots, c_n) \in \CCC_{n,\lambda}$, the algorithm $\iota$ starts by placing 
$1$ in the column with coinversion label $c_1$.  If $c_1 < \ell(\lambda)$, the entry $1$ fills a box in the container of 
$\lambda$, and the columns formed by the remaining container boxes 
(as well as their coinversion labels)
rearrange to give the container corresponding
to $\lambda^{(c_1 + 1)}$.  If $c_1 \geq \ell(\lambda)$, the entry $1$ is floating, and the container 
remains unchanged. 
Equation~\eqref{disjoint-union-decomposition} and induction on $n$ guarantee that 
the algorithm $\iota$ fills the container of $\lambda$, so that 
$\iota: \CCC_{n,\lambda} \rightarrow \OP_{n,\lambda}$ is well-defined.
It is routine to check that the maps $\code$ and $\iota$ are mutually inverse.
\end{proof}

\section{The harmonic space $V_{n,\lambda}$}
\label{Harmonic}

Throughout this section, we fix $k \leq n$ and let $\lambda = (\lambda_1 \geq \cdots \geq \lambda_s \geq 0)$
be a partition of $k$ (with trailing zeros allowed).
We write $\lambda'$ for the partition conjugate to $\lambda$.

\subsection{Injective tableaux and their polynomials}
Let $T \in \Inj(\lambda, \leq n)$ be an injective tableau of shape $\lambda$ with entries $\leq n$.
We introduce the monomial
$\xx(T) = x_1^{a_1} \cdots x_n^{a_n}$ where
\begin{equation}
a_i = \begin{cases}
b & \text{if $i$ appears in $T$ with $b$ boxes directly below it}, \\
s-1 & \text{if $i$ does not appear in $T$}.
\end{cases}
\end{equation}
As an example, if $\lambda = (3,3,1,0,0)$ (so that $s = 5$) and 
\begin{equation}
\label{example-tableau}
\begin{Young}
,& ,& 2 & 1 & 3 \cr
,T = &, & 5 & 4 & 9 \cr
,& ,& 6
\end{Young}
\end{equation}
we have
\begin{equation*}
\xx(T) = (x_2^2 x_5^1 x_6^0) \times (x_1^1 x_4^0) \times (x_3^1 x_9^0) \times (x_7^4 x_8^4) =  
x_1^1 x_2^2 x_3^1 x_4^0 x_5^1 x_6^0 x_7^4 x_8^4 x_9^0.
\end{equation*}

Given a tableau $T \in \Inj(\lambda, \leq n)$, we let 
$C_T \subseteq \symm_n$ be the parabolic subgroup of permutations $w \in \symm_n$
which stabilize the columns of $T$ and satisfy $w(i) = i$ for any $1 \leq i \leq n$ which does not appear in $T$.
In our case, we have $C_T = \symm_{\{2,5,6\}} \times \symm_{\{1,4\}} \times \symm_{\{3,9\}} \subseteq \symm_9$.
We also define the group algebra element
$\varepsilon_T \in \QQ[\symm_n]$ by
algebra element
\begin{equation}
\varepsilon_T := \sum_{w \in C_T} \sign(w) \cdot w.
\end{equation}

\begin{defn}
Let $T \in \Inj(\lambda, \leq n)$. The polynomial $\delta_T \in \QQ[\xx_n]$ is 
\begin{equation}
\delta_T := \varepsilon_T \cdot \xx(T),
\end{equation}
the image of $\xx(T)$ under the group algebra element $\varepsilon_T$.
\end{defn}

The notation $\delta_T$ is justified as follows.   If $T$ has columns $C_1, \dots, C_r$, then
$\delta_T$ factors as 
\begin{equation}
\delta_T = \delta_{C_1} \cdots \delta_{C_r} \times \prod_{\substack{1 \leq i \leq n \\ \text{$i$ not appearing in $T$}}} x_i^{s-1}
\end{equation}
where $\delta_{C_j}$ is the Vandermonde in the set of variables whose indices appear in $C_j$.
In our example we have
\begin{align*}
\delta_T &= \delta_{\{2,5,6\}} \times \delta_{\{1,4\}} \times \delta_{\{3,9\}} \times x_7^4 x_8^4 \\
&= (x_2 - x_5) (x_2 - x_6)(x_5 - x_6) \times 
(x_1 - x_4) \times (x_3 - x_9) \times x_7^4 x_8^4.
\end{align*}
The polynomial $\delta_T$ and the monomial $\xx(T)$ are related as follows.

\begin{observation}
\label{leading-delta}
The lexicographical leading term of $\delta_T$ is $\xx(T)$.
\end{observation}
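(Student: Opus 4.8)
The plan is to reduce the claim to an elementary fact about the lexicographically leading term of a product of Vandermonde determinants in disjoint variable sets. Recall from the factorization displayed just above the statement that
\begin{equation*}
\delta_T = \delta_{C_1} \cdots \delta_{C_r} \times \prod_{\substack{1 \leq i \leq n \\ i \text{ not appearing in } T}} x_i^{s-1},
\end{equation*}
where $C_1, \dots, C_r$ are the columns of $T$ and $\delta_{C_j}$ is the Vandermonde in the variables indexed by the entries of $C_j$. Because the columns of an injective tableau use pairwise disjoint sets of indices, and the trailing monomial factor also uses a set of indices disjoint from all the columns, the variable sets appearing in these $r+1$ factors are pairwise disjoint. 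The lexicographic term order on $\QQ[\xx_n]$ is multiplicative, so the leading term of a product of polynomials in pairwise disjoint variable sets is the product of the leading terms; hence it suffices to identify the leading term of each $\delta_{C_j}$ and observe that the trailing monomial factor is already a monomial.

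So the key step is: for a Vandermonde $\delta_C = \prod_{i<j,\ i,j \in C}(x_i - x_j)$ in a set $C = \{i_1 < i_2 < \cdots < i_m\}$ of variable indices, the lexicographic leading monomial is $x_{i_1}^{m-1} x_{i_2}^{m-2} \cdots x_{i_m}^{0}$. I would prove this by expanding the determinant: each term corresponds to a choice, for every factor $(x_{i_a} - x_{i_b})$ with $a < b$, of either $x_{i_a}$ or $x_{i_b}$. Under the lex order (with $x_1 > x_2 > \cdots > x_n$), to maximize the resulting monomial one wants the exponent of $x_{i_1}$ as large as possible, then that of $x_{i_2}$, and so on; taking $x_{i_a}$ in every factor in which $i_a$ is the smaller index gives exponent $m-a$ on $x_{i_a}$, and a short induction (or a direct comparison argument: any deviation from this choice strictly lowers the exponent of some earliest-possible variable) shows this term is strictly lex-larger than every other. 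Now comparing with the definition of $\xx(T)$: if $i = i_a$ appears in column $C_j$ of $T$ in the row with exactly $b$ boxes directly below it, then within that column $i_a$ is the $(b+1)$-st entry counting from the bottom, i.e. it has $b$ entries below and $|C_j| - 1 - b$ entries above; among the entries of $C_j$ it is the $(|C_j| - b)$-th smallest (since columns of $T$ are column-strict, entries increase down the column, so the entry with $b$ boxes below is the $(|C_j|-b)$-th from the top). Matching indices, the exponent of $x_i$ in the leading term of $\delta_{C_j}$ is $|C_j| - (|C_j| - b) = b$, exactly the exponent $a_i = b$ prescribed by $\xx(T)$. For an index $i$ not appearing in $T$, the exponent in $\delta_T$ comes from the trailing factor $x_i^{s-1}$, again matching $a_i = s-1$. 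Multiplying these leading monomials over all columns and over the absent indices reproduces $\xx(T)$.

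The main obstacle — really the only nontrivial point — is the bookkeeping that translates "the $j$-th smallest variable index in a column gets exponent (size minus $j$) in the Vandermonde leading term" into "an entry with $b$ boxes below it gets exponent $b$," which relies on the column-strictness of $T$ to guarantee that position-from-the-bottom equals rank-among-the-entries. I would state the Vandermonde leading-term computation as a small lemma (or cite it as standard, since it is the classical fact underlying Artin's basis recalled in the introduction) and then spend one or two sentences on this index translation; everything else is immediate from multiplicativity of the lex order and the disjointness of the variable sets.
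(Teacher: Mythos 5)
Your proof is correct and follows exactly the route the paper intends: the Observation is stated without proof immediately after the factorization $\delta_T = \delta_{C_1}\cdots\delta_{C_r}\times\prod x_i^{s-1}$, and your argument (multiplicativity of the lex leading term, the classical leading monomial $x_{i_1}^{m-1}x_{i_2}^{m-2}\cdots x_{i_m}^{0}$ of a Vandermonde on $\{i_1<\cdots<i_m\}$, and the column-strictness bookkeeping identifying ``$b$ boxes below'' with ``$(|C_j|-b)$-th smallest'') is precisely the verification the authors leave implicit. The only cosmetic remark is that disjointness of the variable sets is not needed for the leading term of a product to be the product of leading terms---this holds for any monomial order over an integral domain---though invoking it does no harm.
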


\subsection{A generating set for $V_{n,\lambda}$ as a $\QQ[\xx_n]$-module}
Recall that $\QQ[\xx_n]$ acts on itself by the rule $f \odot g := (\partial f)(g)$.
The harmonic space $V_{n,\lambda}$ is a submodule for this action.
The polynomials $\delta_T$, where $T$ varies over $\Inj(\lambda, \leq n)$,
will turn out to generate the harmonic space $V_{n,\lambda}$ as a $\QQ[\xx_n]$-module.
We first establish that the $\delta_T$ are contained in $V_{n,\lambda}$.

\begin{lemma}
\label{delta-is-harmonic}
Let $T \in \Inj(\lambda, \leq n)$ be a tableau. The polynomial $\delta_T$ is contained in the harmonic space
$V_{n,\lambda}$.
\end{lemma}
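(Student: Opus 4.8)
The plan is to show that $\delta_T$ is annihilated by $\partial f$ for every generator $f$ of the ideal $I_{n,\lambda}$; since the harmonic space is $I_{n,\lambda}^\perp$ and $\langle f,g\rangle$ is the constant term of $f\odot g$, it suffices (after a standard reduction) to check that $f\odot \delta_T = 0$ for each generator, because if $f\odot\delta_T=0$ then certainly $\langle hf,\delta_T\rangle=0$ for all $h$. The generators come in two families: the high powers $x_i^{s}$, and the elementary symmetric polynomials $e_d(S)$ for $d>|S|-(\lambda'_n+\cdots+\lambda'_{n-|S|+1})$.

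For the powers $x_i^{s}$: recall $\delta_T=\delta_{C_1}\cdots\delta_{C_r}\times\prod_{i\text{ not in }T}x_i^{s-1}$. The variable $x_i$ appears in $\delta_T$ with degree at most $s-1$ — if $i$ is floating it appears with exponent exactly $s-1$, and if $i$ lies in a column $C_j$ it appears in $\delta_{C_j}$ with degree at most $|C_j|-1\le s-1$ (columns have height $\le s$ since $\lambda$ has $s$ parts) and in no other factor. Hence $\partial^s/\partial x_i^s$ kills $\delta_T$. This handles the first family cleanly.

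The main work is the second family. Fix $S\subseteq[n]$ with $|S|=m$, and set $\theta=\lambda'_n+\lambda'_{n-1}+\cdots+\lambda'_{n-m+1}$, so we must show $e_d(S)\odot\delta_T=0$ whenever $d>m-\theta$. The key structural fact is that $\theta$ equals the number of boxes of the Young diagram of $\lambda$ lying in its \emph{last $m$ columns} — equivalently, since the columns of $T$ have sizes $\lambda'_1\ge\lambda'_2\ge\cdots$, the quantity $m-\theta$ is the maximum, over all ways of intersecting $S$ with the column-index-sets of $T$, one can say more precisely: among the $|S|=m$ variables, at most $m-\theta$ of them can be ``free'' of the antisymmetrization. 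Concretely, I would argue that the polynomial $\delta_T$, viewed as a polynomial in the variables $\{x_i:i\in S\}$ with the other variables as coefficients, is annihilated by $e_d$ of that variable set once $d$ exceeds $m-\theta$. The cleanest route is: each factor $\delta_{C_j}$ is alternating in the variables $\{x_i:i\in C_j\}$, hence $e_d$ applied to any \emph{subset} of those variables of size exceeding $|C_j|-1$ annihilates $\delta_{C_j}$ (this is the classical fact that $\partial(e_d)$ kills a Vandermonde in fewer-than-$d{+}1$ variables, extended to the statement that $e_d(S')\odot\delta_m=0$ when $|S'|>m-$something). One then needs the multivariate Leibniz rule for how $e_d(S)$, as a differential operator, distributes over the product $\delta_{C_1}\cdots\delta_{C_r}\cdot\prod x_i^{s-1}$: writing $e_d(S)=\sum$ over $d$-subsets $U\subseteq S$ of $\prod_{i\in U}x_i$, each term $\prod_{i\in U}\partial/\partial x_i$ distributes as a sum over how $U$ is split among the factors, and one shows every summand vanishes because some factor receives too many derivatives relative to how many distinct variables of that factor it can absorb. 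Counting: the $j$-th column absorbs at most $\min(|C_j\cap S|, |C_j|-1)$ derivatives from $S$ before dying, a floating variable $x_i^{s-1}$ absorbs at most $s-1$, but since $i\in S$ contributes at most one derivative per term here, the relevant bound is that the total number of $S$-derivatives a nonzero term can carry is $\sum_j \min(|C_j\cap S|,\lambda'_j-1)+|S\cap\{\text{floating}\}|$, and I must check this is $\le m-\theta$. This combinatorial inequality — that $\sum_j\min(|C_j\cap S|,\lambda'_j-1)+|\{\text{floating }i\in S\}|\le m-\theta$ where $\theta=\sum_{j}\max(\lambda'_j-(m-|\text{something}|),0)$ type expression — is exactly the statement $\theta=\lambda'_n+\cdots+\lambda'_{n-m+1}$ rewritten; it reduces to: for any column multiset with sizes $\lambda'_1\ge\lambda'_2\ge\cdots$ and any way of placing $m$ marked variables, $\sum_j(\,|C_j\cap S| - \min(|C_j\cap S|,\lambda'_j-1)\,) + (\text{non-floating non-}S\ \text{count contributes }0)\ge \theta$. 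I expect this counting inequality to be the main obstacle — it is where the precise definition of $\theta$ via the conjugate partition and the bound $d>|S|-\theta$ must match up exactly, and it likely mirrors a lemma in Garsia--Procesi or Griffin that I would want to cite or adapt (the statement that $e_d(S)$ lies in the Tanisaki-type ideal precisely when this inequality holds). Once that inequality is in hand, every monomial-term of $e_d(S)$ applied to $\delta_T$ vanishes termwise, so $e_d(S)\odot\delta_T=0$, completing the proof that $\delta_T\in V_{n,\lambda}$.
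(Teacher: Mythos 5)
Your reduction to checking $f\odot\delta_T=0$ on generators is correct and is exactly what the paper does, and your treatment of the generators $x_i^{s}$ (every variable occurs in $\delta_T$ with exponent at most $s-1$) matches the paper's. The gap is in the $e_d(S)$ case: the termwise-vanishing strategy does not work, and the counting inequality you flag as "the main obstacle" is in fact false. Since the factors $\delta_{C_1},\dots,\delta_{C_r},\prod x_i^{s-1}$ involve disjoint variable sets, the term of $e_d(S)\odot\delta_T$ indexed by a $d$-subset $U\subseteq S$ is killed only if some column satisfies $C_j\subseteq U$ (or a floating $x_i^{s-1}$ with $s=1$ receives a derivative), so a term survives whenever $d\le\sum_j\min(|S\cap C_j|,\lambda_j'-1)+|S\cap\{\text{floating}\}|$, and this bound can strictly exceed $|S|-\theta$. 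Concretely, in the paper's running example take $n=9$, $\lambda=(3,3,1,0,0)$, $T$ with columns $C_1=\{2,5,6\}$, $C_2=\{1,4\}$, $C_3=\{3,9\}$ and floating entries $7,8$, and $S=[9]\setminus\{5\}$. Then $\theta=\lambda_2'+\lambda_3'=4$, so $e_5(S)$ is a generator of $I_{n,\lambda}$, yet the single term $U=\{1,2,3,6,7\}$ gives
\begin{equation*}
\partial_2\partial_6\,\delta_{\{2,5,6\}}\cdot\partial_1\,\delta_{\{1,4\}}\cdot\partial_3\,\delta_{\{3,9\}}\cdot\partial_7\,(x_7^4x_8^4)\neq 0 ,
\end{equation*}
since no factor receives all of its variables as derivatives. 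So $e_5(S)\odot\delta_T$ does not vanish term by term; it vanishes only after cancellation among terms.

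The missing idea is that the cancellation must be organized across the antisymmetrizer $\varepsilon_T$, not within a single monomial of $\xx(T)$. The paper encodes each pair (element $w\in C_T$, choice of $d$ derivative positions) as a "$d$-dotted permuted $(S,T)$-staircase" and exhibits a sign-reversing, weight-preserving involution: the hypothesis $d>|S|-\theta$ forces the existence of a column $C_t$ of $T$ all of whose entries lie in $S$ and at least one of whose staircase columns is dotted; by a pigeonhole argument two of those staircase columns then have equal numbers of undotted boxes, and swapping them corresponds to composing $w$ with a transposition in $C_T$, reversing the sign. To repair your argument you would need to build this (or an equivalent) pairing between surviving terms coming from different $w\in C_T$; the purely local "absorption capacity" count cannot suffice.
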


\begin{proof}
It suffices to check that for each generator $f$ of the ideal $I_{n,\lambda}$ we have $f \odot \delta_T = 0$.
If $f = x_i^{s}$, the identity $f \odot \delta_T = 0$ follows from the fact that no exponents $\geq s$ appear
in $\xx(T)$ or in $\delta_T$.
We may therefore fix $1 \leq j \leq r$ and assume that $f$ is the elementary symmetric
polynomial $f = e_d(S)$ for some $S \subseteq [n]$ of size $|S| = n-j+1$
whose degree $d$ satisfies $d > |S| - \lambda'_{j} - \lambda'_{j+1} - \cdots - \lambda_n'$.
It suffices to show that $e_d(S) \odot \delta_T = 0$.
Without loss of generality we assume  $d \leq |S|$, so that $e_d(S) \neq 0$.

We give a combinatorial model for $e_d(S) \odot \delta_T$ as follows.  
Let $C_1, \dots, C_r$ be the columns of $T$, read from left to right.

The {\em $(S,T)$-staircase} consists of $n$ columns of boxes arranged as follows.
The $i^{th}$ column is decorated with the symbol $i_t$ where $t = 0$ if $i$ does not appear in $T$
and $i \in C_t$ otherwise.
If $i \notin S$, we further decorate $i_t$ with a circle $i_i^{\circ}$; such entries $i$ are called {\em frozen}.
If $i$ does not appear in $T$, the $i^{th}$ column of the $(S,T)$-staircase has $s-1$ boxes.
If $i$ is in row $r$ of $C_t$, the $i^{th}$ column has $|C_t| - r = \lambda'_t - r$ boxes.

Let us give an example of these concepts. Suppose $n = 9, s = 5, \lambda = (3,3,1)$, and $T$ is as 
in \eqref{example-tableau}.
Let $j = 2$ and take $S = \{1,2,3,4,6,7,8,9 \}$ so that 
$[n] - S = \{5\}$.
The $(S,T)$-staircase is as follows.

\begin{equation*}
\begin{young}
, & ,&, &, & ,&, & & &, \cr
 , & ,&,&, & ,&, & & &, \cr
 ,  & &, &, & ,&, & & &, \cr
    & & &, & &, & & &, \cr
,1_2 & ,2_1  & ,
3_3 & ,4_2 & ,5_1^{\circ} & ,6_1 & ,7_0 & ,8_0 & ,9_3
\end{young}
\end{equation*}

We apply permutations $w \in C_T$ to sequences $(B_1, \dots, B_n)$ of $n$ stacks of 
boxes by rearranging the box stacks.
A {\em permuted $(S,T)$-staircase} $\sigma$ is obtained from the $(S,T)$-staircase by applying some permutation
$w \in C_T$ which stabilizes the columns of $T$.
If $(S,T)$ is as above and $w = (2,5,6) (3,9) \in C_T$, the associated permuted staircase $\sigma$ is

\begin{equation*}
\begin{young}
, & ,&, &, & ,&, & & &, \cr
 , & ,&,&, & ,&, & & &, \cr
 ,  &, &, &, & &, & & &, \cr
    & ,& ,&, & & & & & \cr
,1_2 & ,2_1  & ,
3_3 & ,4_2 & ,5_1^{\circ} & ,6_1 & ,7_0 & ,8_0 & ,9_3
\end{young}
\end{equation*}
Observe that $w$ leaves the labels unchanged.
It should be clear that, for fixed $S$ and $T$, a permuted $(S,T)$-staircase determines
the permutation $w \in C_T$ uniquely.

The {\em sign} of an $(S,T)$-staircase $\sigma$ is the sign of the permutation $w \in C_T$,
i.e. $\mathrm{sign}(\sigma) = \sign(w)$;
in our example  $\sign((2,5,6)(3,9)) = -1$.
The {\em weight} $\wt(\sigma)$ of $\sigma$ is the monomial $\wt(\sigma) = x_1^{a_1} \cdots x_n^{a_n}$,
where $a_i$ is the number of boxes in column $i$. 
In our example $\wt(\sigma) = x_1 x_5^2 x_6 x_7^4 x_8^4 x_9$.
The polynomial $\delta_T$ has the combinatorial interpretation
\begin{equation}
\delta_T = \sum_{\sigma} \sign(\sigma) \cdot \wt(\sigma),
\end{equation}
where the sum is over all permuted $(S,T)$-staircases $\sigma$.

A {\em $d$-dotted permuted $(S,T)$-staircase} $\sigma^{\bullet}$
is obtained from a permuted $(S,T)$-staircase $\sigma$ by marking
 $d$ boxes with $\bullet$ so that no two marked boxes are in the same column and 
 so that no frozen column $i_t^{\circ}$ gets a marked box.
 With $d = 4$ and $\sigma$  as above, an example choice for
 $\sigma^{\bullet}$ is as follows.
 
 \begin{equation*}
\begin{young}
, & ,&, &, & ,&, & & \bullet &, \cr
 , & ,&,&, & ,&, &  & &, \cr
 ,  &, &, &, & &, &\bullet & &, \cr
   \bullet & ,& ,&, & & \bullet & & &  \cr
,1_2 & ,2_1  & ,
3_3 & ,4_2 & ,5_1^{\circ} & ,6_1 & ,7_0 & ,8_0 & ,9_3
\end{young}
\end{equation*}
The {\em sign} of $\sigma^{\bullet}$ is the same as the sign of the unmarked $(S,T)$-staircase $\sigma$, i.e.
$\sign(\sigma^{\bullet}) = \sign(\sigma)$. The {\em weight} $\wt(\sigma^{\bullet})$ is the monomial 
$x_1^{a_1} \cdots x_n^{a_n}$ where $a_i$ is the number of unmarked boxes in column $i$; in the above 
example
$\wt(\sigma^{\bullet}) = x_5^2 x_7^3 x_8^3 x_9$.

The polynomial $e_d(S) \odot \delta_T$ has a combinatorial interpretation in terms of dotted
permuted staircases. More precisely 
we have 
\begin{equation}
\label{combinatorial-interpretation}
e_d(S) \odot \delta_T = \sum_{\sigma^{\bullet}} \sign(\sigma^{\bullet}) \cdot \wt(\sigma^{\bullet}),
\end{equation}
where the sum is over all $d$-dotted permuted $(S,T)$-staircases $\sigma^{\bullet}$.
Our goal is to show that Equation~\eqref{combinatorial-interpretation} equals zero.

We use a sign-reversing involution to prove that the right-hand side of 
Equation~\eqref{combinatorial-interpretation} vanishes.
The following key observation may be verified from our assumptions on $|S|$ and $d$.

{\bf Observation:}
{\em For any $d$-dotted permuted $(S,T)$-staircase $\sigma^{\bullet}$, there is some value
$1 \leq t \leq r$ such that for the corresponding column $C_t$ of $T$:
\begin{enumerate}
\item
 no column of $\sigma^{\bullet}$ 
indexed by an entry in $C_t$  is frozen,  and 
\item at least
one column of $\sigma^{\bullet}$ indexed by an entry of $C_t$ contains a $\bullet$.
\end{enumerate}}

In our running example, we may take $t = 2$, so that the corresponding column $C_2$ of $T$
has entries $1$ and $4$. Neither column 1 nor column 4 of $\sigma^{\bullet}$ is frozen and 
column 1 contains a $\bullet$.

If $\sigma^{\bullet}$ is a $d$-dotted permuted $(S,T)$-staircase, let $t \geq 1$ be minimal such
 that $t$ is as in the above observation.
 Two of the columns in $\sigma^{\bullet}$ indexed  by entries in
 $C_t$ must contain the same number of unmarked boxes.
 Let $\iota(\sigma^{\bullet})$ be obtained from $\sigma^{\bullet}$ by interchanging the two such columns
 of minimal height. In our running example,  we have $t = 2$ and $\iota(\sigma^{\bullet})$ is given by
 interchanging columns 1 and 4:
  \begin{equation*}
\begin{young}
, & ,&, &, & ,&, & & \bullet &, \cr
 , & ,&,&, & ,&, &  & &, \cr
 ,  &, &, &, & &, &\bullet & &, \cr
  , & ,& ,& \bullet & & \bullet & & &  \cr
,1_2 & ,2_1  & ,
3_3 & ,4_2 & ,5_1^{\circ} & ,6_1 & ,7_0 & ,8_0 & ,9_3
\end{young}
\end{equation*}
For any $d$-dotted permuted $(S,T)$-staircase $\sigma^{\bullet}$ we have
\begin{equation}
\iota(\iota(\sigma^{\bullet})) = \sigma^{\bullet}, \quad
\sign(\iota(\sigma^{\bullet})) = - \sign(\sigma^{\bullet}), \quad \text{and} \quad
\wt(\iota(\sigma^{\bullet})) = \wt(\sigma^{\bullet}).
\end{equation}
That is, the map $\sigma^{\bullet} \mapsto \iota(\sigma^{\bullet})$ is a weight-preserving and sign-reversing
involution which verifies that the right-hand side of Equation~\eqref{combinatorial-interpretation} vanishes.
\end{proof}

\begin{defn}
\label{tableau-partition}
Let $k \leq n$, let $\lambda = (\lambda_1 \geq \cdots \geq \lambda_s)$ be a partition of $k$, and let 
$\sigma \in \OP_{n,\lambda}$. Define $T(\sigma) \in \Inj(\lambda, \leq n)$ to be the tableau whose 
$i^{th}$ column consists in the entries in row $i$ from the top of the container of $\sigma$.
\end{defn}

For example, if $\sigma$ is as in \eqref{example-sigma}, then $T(\sigma)$ is shown below.
\begin{equation*}
\begin{Young}
 5 & 2 & 1 \cr
 8 & 3 & 6 \cr
12 & 4 \cr
13 & 7
\end{Young}
\end{equation*}
In particular, floating entries in $\sigma$ do not appear in $T(\sigma)$.  The most important 
property of $T(\sigma)$ is shown below; it follows from Observation~\ref{leading-delta}
and the definition of $\maxcode$.

\begin{observation}
\label{maxcode-leading}
The lexicographical leading term of $\delta_{T(\sigma)}$ is the monomial
$x_1^{a_1} \cdots x_n^{a_n}$ where $(a_1, \dots, a_n) = \maxcode(\sigma)$.
\end{observation}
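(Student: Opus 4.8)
The plan is to unwind the two ingredients that Observation~\ref{maxcode-leading} is said to follow from, and check that they combine in exactly the claimed way. By Observation~\ref{leading-delta}, the lexicographical leading term of $\delta_{T}$ is $\xx(T) = x_1^{a_1} \cdots x_n^{a_n}$, where $a_i$ counts the number of boxes directly below $i$ in $T$ if $i$ appears in $T$, and $a_i = s-1$ if $i$ does not appear in $T$. So the entire statement reduces to the purely combinatorial identity: if we set $T = T(\sigma)$, then the exponent sequence of $\xx(T(\sigma))$ equals $\maxcode(\sigma)$.

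First I would spell out what $\xx(T(\sigma))$ is. By Definition~\ref{tableau-partition}, the $i^{th}$ column of $T(\sigma)$ is the set of entries lying in the $i^{th}$ row from the top of the container of $\sigma$; this row has $\lambda'_i$ boxes, so column $i$ of $T(\sigma)$ has $\lambda'_i$ entries. An entry $m$ of $\sigma$ that sits in the $i^{th}$ row from the top and is the $j^{th}$ box from the top within its column of $T(\sigma)$ has exactly $\lambda'_i - j$ boxes below it in $T(\sigma)$. Meanwhile the entries of $\sigma$ lying in the $i^{th}$ row from the top are precisely the non-floating entries placed, by the recipe defining $\maxcode(\sigma)$ in Definition~\ref{maxcode-definition}, into the positions of that row and assigned the values $\lambda'_i - 1, \lambda'_i - 2, \dots, 1, 0$. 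The only thing to verify is that the order in which those values are distributed among the row's positions matches the column-order in $T(\sigma)$: i.e., that the $j^{th}$ smallest index in row $i$ of $\sigma$'s container corresponds to the $j^{th}$ box (from the top) of column $i$ of $T(\sigma)$, hence gets exponent $\lambda'_i - j$ in both descriptions. This is true by construction — $T(\sigma)$ is column-strict with entries increasing down the column, and the container rows are also filled so that reading a row gives increasing entries — but it is the one spot where one must be careful about the direction of the row versus the direction of the column. I would state this alignment explicitly as the crux.

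Next I would handle the floating entries. By Definition~\ref{maxcode-definition}, every floating position of $\sigma$ receives the value $s-1$ in $\maxcode(\sigma)$. On the tableau side, floating entries of $\sigma$ do not appear in $T(\sigma)$ (as already noted after the definition), so by the definition of $\xx(T)$ they contribute exponent $s-1$ to $\xx(T(\sigma))$. These agree. Combining the non-floating case and the floating case over all of $[n]$ gives the componentwise equality of exponent vectors, namely $\xx(T(\sigma)) = x_1^{a_1} \cdots x_n^{a_n}$ with $(a_1, \dots, a_n) = \maxcode(\sigma)$, and then Observation~\ref{leading-delta} upgrades this to the statement about the leading term of $\delta_{T(\sigma)}$.

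I do not expect any serious obstacle here; the statement is essentially a bookkeeping identity between two descriptions of the same integer vector. The only genuinely substantive point — and the one I would make sure to pin down rather than wave at — is the matching of orderings in the previous paragraph, i.e. that ``$j^{th}$ from the top in column $i$ of $T(\sigma)$'' and ``the position assigned value $\lambda'_i - j$ when building $\maxcode(\sigma)$'' pick out the same entry of $\sigma$. Everything else (the $s-1$ convention for missing/floating entries, the count of $\lambda'_i$ boxes per row) is immediate from the definitions already in place.
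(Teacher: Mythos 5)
Your proposal is correct and matches the paper, which offers no written proof beyond asserting that the observation ``follows from Observation~\ref{leading-delta} and the definition of $\maxcode$'' --- your unwinding is exactly that bookkeeping, and you correctly identify the one point needing care, namely that the $j^{th}$ smallest index in row $i$ of the container is the $j^{th}$ entry from the top of column $i$ of $T(\sigma)$ and receives $\lambda'_i - j$ under both descriptions. One small slip: container rows need not read increasingly from left to right (e.g.\ row $3,7,2,4$ in the running example), but this is harmless since the $\maxcode$ shuffle distributes $(\lambda'_i-1,\dots,1,0)$ by increasing \emph{position index}, which is the alignment you actually use.
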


Observation~\ref{maxcode-leading} allows us to associate a harmonic polynomial to any 
ordered set partition $\sigma \in \OP_{n,\lambda}$.

\begin{defn}
\label{partition-polynomial}
Let $k \leq n$, let $\lambda = (\lambda_1 \geq \cdots \geq \lambda_s)$ be a partition of $k$, and let 
$\sigma \in \OP_{n,\lambda}$ satisfy $\code(\sigma) = (c_1, \dots, c_n)$ and
$\maxcode(\sigma) = (a_1, \dots, a_n)$.  Define a polynomial $\delta_{\sigma} \in \QQ[\xx_n]$ by
\begin{equation}
\delta_{\sigma} := (x_1^{a_1 - c_1} \cdots x_n^{a_n - c_n}) \odot \delta_{T(\sigma)}.
\end{equation}
\end{defn}

We are ready to describe our generating set for the harmonic space $V_{n,\lambda}$ 
as a $\QQ[\xx_n]$-module.

\begin{theorem}
\label{generating-harmonic-set}
Let $k \leq n$ be positive integers and let $\lambda = (\lambda_1 \geq \cdots \geq \lambda_s)$ be a partition
of $k$.  The harmonic space $V_{n,\lambda}$ is the smallest subspace of $\QQ[\xx_n]$ which
\begin{itemize}
\item contains the polynomial $\delta_T$ for every tableau $T \in \Inj(\lambda, \leq n)$, and
\item is closed under the partial derivative operators
$\partial/\partial x_1, \dots, \partial/\partial x_n$.
\end{itemize}
Equivalently, the set $\{ \delta_T \,:\, T \in \Inj(\lambda, \leq n) \}$ generates $V_{n,\lambda}$ 
as a $\QQ[\xx_n]$-module.
\end{theorem}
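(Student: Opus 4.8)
The plan is to introduce the subspace $W \subseteq \QQ[\xx_n]$ defined to be the smallest subspace containing $\delta_T$ for every $T \in \Inj(\lambda, \leq n)$ and closed under $\partial/\partial x_1, \dots, \partial/\partial x_n$, and to prove $W = V_{n,\lambda}$ by two inclusions. The inclusion $W \subseteq V_{n,\lambda}$ is the easy half: by Lemma~\ref{delta-is-harmonic} each $\delta_T$ lies in $V_{n,\lambda}$, and since $V_{n,\lambda}$ is a $\QQ[\xx_n]$-submodule under $\odot$ it is in particular closed under each $x_i \odot (-) = \partial/\partial x_i$; minimality of $W$ then gives $W \subseteq V_{n,\lambda}$. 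For the reverse inclusion I would use a dimension count, so it suffices to exhibit $\dim_\QQ V_{n,\lambda}$ linearly independent elements of $W$. Here the graded isomorphism $R_{n,\lambda} \cong V_{n,\lambda}$, Griffin's monomial basis (Theorem~\ref{griffin-basis-theorem}), and the bijection $\code \colon \OP_{n,\lambda} \to \CCC_{n,\lambda}$ (Theorem~\ref{code-is-bijection}) together give $\dim_\QQ V_{n,\lambda} = |\CCC_{n,\lambda}| = |\OP_{n,\lambda}|$, so I need $|\OP_{n,\lambda}|$ independent elements.

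The candidates are the polynomials $\delta_\sigma$ of Definition~\ref{partition-polynomial}, one for each $\sigma \in \OP_{n,\lambda}$. First I would check $\delta_\sigma \in W$: the tableau $T(\sigma)$ belongs to $\Inj(\lambda, \leq n)$ by Definition~\ref{tableau-partition}, so $\delta_{T(\sigma)} \in W$, and writing $\code(\sigma) = (c_1, \dots, c_n)$ and $\maxcode(\sigma) = (a_1, \dots, a_n)$, Lemma~\ref{code-less-than-max} guarantees $a_i - c_i \geq 0$ for all $i$, so $\delta_\sigma = (x_1^{a_1 - c_1} \cdots x_n^{a_n - c_n}) \odot \delta_{T(\sigma)}$ is obtained from $\delta_{T(\sigma)}$ by applying partial derivative operators and hence lies in $W$. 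Next I would pin down the lexicographic leading monomial of $\delta_\sigma$: by Observation~\ref{maxcode-leading} the leading monomial of $\delta_{T(\sigma)}$ is $x_1^{a_1} \cdots x_n^{a_n}$; since $a_i \geq a_i - c_i$ this monomial survives the application of $(x_1^{a_1 - c_1} \cdots x_n^{a_n - c_n}) \odot (-)$ with a nonzero scalar, and since subtracting a fixed exponent vector preserves lexicographic order among the (finitely many) surviving monomials, the leading monomial of $\delta_\sigma$ is $x_1^{c_1} \cdots x_n^{c_n}$ with nonzero coefficient.

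Since $\code$ is injective, the exponent vectors $\code(\sigma)$ are pairwise distinct as $\sigma$ ranges over $\OP_{n,\lambda}$, so the $\delta_\sigma$ have pairwise distinct leading monomials and are therefore linearly independent; this yields $\dim_\QQ W \geq |\OP_{n,\lambda}| = \dim_\QQ V_{n,\lambda}$, and with the first inclusion we conclude $W = V_{n,\lambda}$. The ``equivalently'' clause is then automatic, since the $\QQ[\xx_n]$-submodule of $(\QQ[\xx_n], \odot)$ generated by a set $G$ is exactly the smallest subspace containing $G$ closed under all $\partial/\partial x_i$ (as $\QQ[\xx_n]$ is spanned by monomials and $x_1^{\alpha_1}\cdots x_n^{\alpha_n} \odot (-) = (\partial/\partial x_1)^{\alpha_1}\cdots(\partial/\partial x_n)^{\alpha_n}$). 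I expect the one point requiring care to be the leading-monomial computation for $\delta_\sigma$ — checking that differentiation neither kills the leading monomial of $\delta_{T(\sigma)}$ nor lifts a smaller monomial above it — but this is routine given that the entries of $\code(\sigma)$ are nonnegative and that exponent subtraction is order-preserving.
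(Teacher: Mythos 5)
Your proposal is correct and follows essentially the same route as the paper: show $W \subseteq V_{n,\lambda}$ via Lemma~\ref{delta-is-harmonic}, then match dimensions by exhibiting the $|\OP_{n,\lambda}|$ polynomials $\delta_\sigma$, which are linearly independent because their lexicographic leading monomials have the distinct exponent sequences $\code(\sigma)$. You even supply a detail the paper leaves implicit, namely that $\delta_\sigma \in W$ because $\maxcode(\sigma) - \code(\sigma)$ is componentwise nonnegative (Lemma~\ref{code-less-than-max}), so $\delta_\sigma$ is reached from $\delta_{T(\sigma)}$ by partial derivatives.
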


\begin{proof}
Let $W_{n,\lambda}$ be the subspace defined by the two bullet points of the theorem.
By Lemma~\ref{delta-is-harmonic} we have the containment of vector spaces
\begin{equation} 
W_{n,\lambda} \subseteq V_{n,\lambda}.
\end{equation}
We also have 
\begin{equation}
\dim V_{n,\lambda} = |\CCC_{n,\lambda}| = |\OP_{n,\lambda}|,
\end{equation}
where the first equality follows from Theorem~\ref{griffin-basis-theorem} and the second
follows from Theorem~\ref{code-is-bijection}.  It therefore suffices to exhibit 
$|\OP_{n,\lambda}|$ linearly independent elements of $W_{n,\lambda}$.

Indeed, Observation~\ref{maxcode-leading} and Definition~\ref{partition-polynomial} imply that 
the lexicographical leading term of $\delta_{\sigma}$ has exponent sequence given by
$\code(\sigma)$.  Theorem~\ref{code-is-bijection} guarantees that the set
$\{ \delta_{\sigma} \,:\, \sigma \in \OP_{n,\lambda} \}$ is linearly independent 
and Lemma~\ref{delta-is-harmonic} assures that the polynomials in this set lie in $W_{n,\lambda}$.
\end{proof}

The proof of Theorem~\ref{generating-harmonic-set} also yields a harmonic basis of $R_{n,\lambda}$.

\begin{theorem}
\label{harmonic-basis}
Let $k \leq n$ be positive integers and let $\lambda = (\lambda_1 \geq \cdots \geq \lambda_s)$ be a partition
of $k$.  The set 
\begin{equation}
\{ \delta_{\sigma} \,:\, \sigma \in \OP_{n,\lambda} \}
\end{equation}
is a harmonic basis of $R_{n,\lambda}$.
\end{theorem}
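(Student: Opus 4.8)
The plan is to deduce Theorem~\ref{harmonic-basis} directly from the proof of Theorem~\ref{generating-harmonic-set}, treating it essentially as a corollary. The key structural fact we exploit is that $\QQ[\xx_n] = V_{n,\lambda} \oplus I_{n,\lambda}$ as graded vector spaces (and even as $\symm_n$-modules), so that any vector space basis of the harmonic space $V_{n,\lambda}$ descends to a basis of the quotient $R_{n,\lambda} = \QQ[\xx_n]/I_{n,\lambda}$. Thus it suffices to show that $\{ \delta_{\sigma} \,:\, \sigma \in \OP_{n,\lambda} \}$ is a basis of $V_{n,\lambda}$.

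First I would recall from Definition~\ref{partition-polynomial} and Observation~\ref{maxcode-leading} that for each $\sigma \in \OP_{n,\lambda}$, the polynomial $\delta_{\sigma} = (x_1^{a_1-c_1} \cdots x_n^{a_n-c_n}) \odot \delta_{T(\sigma)}$ has lexicographical leading term with exponent sequence exactly $\code(\sigma) = (c_1, \dots, c_n)$ (applying the differential operator $x_1^{a_1-c_1}\cdots x_n^{a_n-c_n}$ to the monomial $x_1^{a_1}\cdots x_n^{a_n} = \maxcode(\sigma)$ strips off the appropriate powers, and the componentwise inequality $\code(\sigma) \leq \maxcode(\sigma)$ of Lemma~\ref{code-less-than-max} guarantees the exponents stay nonnegative; lower-order terms of $\delta_{T(\sigma)}$ can only produce lower-order terms after differentiation). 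By Theorem~\ref{code-is-bijection}, the map $\sigma \mapsto \code(\sigma)$ is a bijection from $\OP_{n,\lambda}$ onto $\CCC_{n,\lambda}$, so the leading monomials of the $\delta_{\sigma}$ are pairwise distinct; a set of polynomials with distinct leading monomials (with respect to any fixed term order) is automatically linearly independent. By Lemma~\ref{delta-is-harmonic}, each $\delta_{T(\sigma)}$ lies in $V_{n,\lambda}$, and since $V_{n,\lambda}$ is closed under the $\odot$-action of $\QQ[\xx_n]$ (in particular under the partial derivative operators), each $\delta_{\sigma}$ lies in $V_{n,\lambda}$ as well.

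Finally I would invoke the dimension count already established in the proof of Theorem~\ref{generating-harmonic-set}: $\dim V_{n,\lambda} = |\CCC_{n,\lambda}| = |\OP_{n,\lambda}|$, where the first equality is Griffin's monomial basis theorem (Theorem~\ref{griffin-basis-theorem}) combined with $V_{n,\lambda} \cong R_{n,\lambda}$, and the second is Theorem~\ref{code-is-bijection}. Since $\{ \delta_{\sigma} \,:\, \sigma \in \OP_{n,\lambda} \}$ is a linearly independent subset of $V_{n,\lambda}$ of cardinality exactly $\dim V_{n,\lambda}$, it is a basis of $V_{n,\lambda}$, and therefore descends to a basis of $R_{n,\lambda}$. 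This last step is purely formal bookkeeping.

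I do not expect a genuine obstacle here, since all the substantive work has been done in the preceding results. The only point requiring a modicum of care is the claim that the leading term of $\delta_{\sigma}$ has exponent sequence $\code(\sigma)$ rather than something lexicographically larger: one must check that applying the monomial differential operator $x_1^{a_1-c_1}\cdots x_n^{a_n-c_n}$ to $\delta_{T(\sigma)}$ does not cause cancellation of the leading term or promotion of a previously subleading term, which follows because the operator $\partial^{a_i-c_i}/\partial x_i^{a_i-c_i}$ acts on each monomial of $\delta_{T(\sigma)}$ by lowering the $i$th exponent by the same fixed amount (killing monomials whose $i$th exponent is too small), hence preserves the lexicographic order among the surviving monomials, and the leading monomial $\maxcode(\sigma)$ of $\delta_{T(\sigma)}$ survives precisely because of Lemma~\ref{code-less-than-max}. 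With that observation in hand, the theorem follows immediately.
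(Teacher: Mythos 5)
Your proposal is correct and follows essentially the same route as the paper: the paper's proof of Theorem~\ref{harmonic-basis} is literally the remark that the proof of Theorem~\ref{generating-harmonic-set} already exhibits $\{\delta_{\sigma}\}$ as $|\OP_{n,\lambda}| = \dim V_{n,\lambda}$ linearly independent harmonic polynomials with distinct lexicographic leading terms indexed by $\code(\sigma)$, which is exactly your argument. Your extra care about why differentiation by $x_1^{a_1-c_1}\cdots x_n^{a_n-c_n}$ neither cancels nor promotes monomials is a detail the paper leaves implicit in Observation~\ref{maxcode-leading} and Definition~\ref{partition-polynomial}, and it is handled correctly.
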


Recall that the {\em Hilbert series} of a graded $\QQ$-algebra $R = \bigoplus_{d \geq 0} R_d$ with each
graded piece finite-dimensional is the formal power series
\begin{equation}
\Hilb(R; q) := \sum_{d \geq 0} (\dim R_d) \cdot q^d.
\end{equation}
We have a combinatorial expression for the Hilbert series of $R_{n,\lambda}$.

\begin{corollary}
\label{hilbert-series}
Let $k \leq n$ be positive integers and let $\lambda = (\lambda_1 \geq \cdots \geq \lambda_s)$ be a partition
of $k$. We have
\begin{equation}
\Hilb(R_{n,\lambda}; q) = \sum_{\sigma \in \OP_{n,\lambda}} q^{\coinv(\sigma)}.
\end{equation}
\end{corollary}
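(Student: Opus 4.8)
The plan is to read the Hilbert series directly off the harmonic basis $\{\delta_{\sigma} \,:\, \sigma \in \OP_{n,\lambda}\}$ of $R_{n,\lambda}$ provided by Theorem~\ref{harmonic-basis}. Since the decomposition $\QQ[\xx_n] = V_{n,\lambda} \oplus I_{n,\lambda}$ respects the degree grading, the projection $\QQ[\xx_n] \twoheadrightarrow R_{n,\lambda}$ is degree-preserving, so the images of the $\delta_{\sigma}$ form a basis of $R_{n,\lambda}$ consisting of homogeneous elements. It therefore suffices to show that $\delta_{\sigma}$ is homogeneous of degree exactly $\coinv(\sigma)$ for each $\sigma \in \OP_{n,\lambda}$; then $\dim (R_{n,\lambda})_d$ equals the number of $\sigma$ with $\coinv(\sigma) = d$, and summing $q^d$ gives the claimed identity.

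First I would record that $\delta_{T(\sigma)}$ is homogeneous. By the factorization $\delta_T = \delta_{C_1} \cdots \delta_{C_r} \times \prod_{i \text{ not in } T} x_i^{s-1}$, the polynomial $\delta_{T(\sigma)}$ is a product of Vandermonde determinants and powers of variables, each homogeneous; its degree equals that of its lexicographic leading term, which by Observation~\ref{maxcode-leading} is the monomial with exponent sequence $\maxcode(\sigma) = (a_1, \dots, a_n)$. Hence $\deg \delta_{T(\sigma)} = a_1 + \cdots + a_n$.

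Next I would track the effect of the differential operator in Definition~\ref{partition-polynomial}. Writing $\code(\sigma) = (c_1, \dots, c_n)$, Lemma~\ref{code-less-than-max} gives the componentwise inequality $(c_1, \dots, c_n) \leq (a_1, \dots, a_n)$, so $x_1^{a_1 - c_1} \cdots x_n^{a_n - c_n}$ is a genuine monomial, of degree $\sum_i (a_i - c_i)$. For a monomial $f$ of degree $e$ and a homogeneous polynomial $g$, the polynomial $f \odot g = (\partial f)(g)$ is either $0$ or homogeneous of degree $\deg g - e$. Applying this with $g = \delta_{T(\sigma)}$ shows that $\delta_{\sigma}$ is either $0$ or homogeneous of degree $\sum_i a_i - \sum_i (a_i - c_i) = \sum_i c_i = \coinv(\sigma)$. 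Since $\delta_{\sigma}$ is a member of a basis of $R_{n,\lambda}$ by Theorem~\ref{harmonic-basis}, it is nonzero, so it is homogeneous of degree exactly $\coinv(\sigma)$.

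Summing over the basis then yields $\Hilb(R_{n,\lambda}; q) = \sum_{\sigma \in \OP_{n,\lambda}} q^{\coinv(\sigma)}$. The argument is essentially bookkeeping: the substantive inputs are the graded harmonic basis of Theorem~\ref{harmonic-basis} (whose proof supplies the crucial nonvanishing of each $\delta_{\sigma}$ via its distinct leading term $x_1^{c_1} \cdots x_n^{c_n}$) and the elementary degree-shift property of the $\odot$-action. I do not anticipate any real obstacle beyond carefully tracking these degrees and confirming that $\maxcode(\sigma) - \code(\sigma)$ has nonnegative entries so that the operator monomial makes sense.
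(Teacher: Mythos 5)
Your proposal is correct and follows the paper's own (very terse) argument: the paper simply notes that $\deg \delta_{\sigma} = \coinv(\sigma)$ and invokes Theorem~\ref{harmonic-basis}, which is exactly the degree bookkeeping you carry out in detail. (The paper also mentions an alternative route via Theorems~\ref{griffin-basis-theorem} and \ref{code-is-bijection}, but your chosen path is the primary one.)
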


\begin{proof}
The degree of the polynomial $\delta_{\sigma}$ is $\coinv(\sigma)$. Now apply
Theorem~\ref{harmonic-basis}.
Alternatively, combine Theorems~\ref{griffin-basis-theorem} and 
\ref{code-is-bijection}.
\end{proof}

\begin{remark}
Griffin found \cite[Thm. 5.12]{Griffin} a variant of the $\coinv$ statistic which encodes 
the graded $\symm_n$-isomorphism type of $R_{n,\lambda}$. 
This may be viewed as a moral extension of Corollary~\ref{hilbert-series}.
Recall that the {\em graded Frobenius image} $\grFrob(V;q)$ of a graded $\symm_n$-module
$V = \bigoplus_{d \geq 0} V_d$ is the symmetric function 
\begin{equation}
\grFrob(V;q) = \sum_{d \geq 0} q^d \sum_{\rho \vdash n} 
\text{(multiplicity of the $\symm_n$-irreducible $S^{\rho}$ in $V_d$)} \cdot s_{\rho}(\xx)
\end{equation}
where $s_{\rho}(\xx)$ is the Schur function corresponding to the partition $\rho \vdash n$.

Given $n$ and $\lambda = (\lambda_1 \geq \cdots \geq \lambda_s)$ with $\lambda_s \geq 0$,
Griffin considers the family $\mathrm{ECI}_{n,\lambda}$ of 
{\em extended column-increasing fillings}.
These are sequences $\mu = (M_1 \mid \cdots \mid M_s)$ of $s$ multisets of positive integers
in which $|M_i| \geq \lambda_i$ for all $i$ such that $|M_1| + \cdots + |M_s| = n$.
Elements of $\mathrm{ECI}_{n,\lambda}$ may be viewed as fillings of the container diagram of $\lambda$
which weakly increase going up columns and fill every box in the container; as an example,
the element 
\begin{equation*}
\mu = ( 1, \, 1, \, 2, \, 4  \mid 2, \, 4, \, 4, \, 5, \, 6 \mid 1, \, 2, \, 4 \mid 3, \, 3 \mid \varnothing \mid 1, \, 1) \in
\mathrm{ECI}_{16,(3,3,2,2,0,0),6}
\end{equation*}
is shown below.

\begin{equation*}
\label{example-mu}
\begin{Young}
 ,& ,6 & ,&,  &,  &,1  \cr
 ,4 & ,5 & ,4 &, &, \varnothing  &, 1 \cr
 2 & 4 & 2 & 3 &, &,   \cr
 1 &  4 & 1 & 3 &, &, \cr
1   & 2 &, &, &, &,  \cr 
\end{Young}
\end{equation*}

Given $\mu \in \mathrm{ECI}_{n,\lambda}$, we attach the monomial $\xx^{\mu}$ in the variable 
set $\xx = (x_1, x_2, \dots )$ where the exponent of $x_i$ is the multiplicity of $i$ in $\mu$.
For $\mu$ as above, we have $\xx^{\mu} = x_1^5 x_2^3 x_3^2 x_4^4 x_5 x_6$.
Griffin defines a statistic $\mathrm{inv}$ on $\mathrm{ECI}_{n,\lambda}$ which is similar to
(but not quite the same as) our statistic $\coinv$ -- see \cite[Rmk. 5.3]{Griffin} -- and proves that the 
graded Frobenius image of $R_{n,\lambda}$ is given combinatorially by
\begin{equation}
\grFrob(R_{n,\lambda};q) = \sum_{\mu \in \mathrm{ECI}_{n,\lambda}} 
q^{\mathrm{inv}(\mu)} \xx^{\mu}.
\end{equation}
\end{remark}

\subsection{A conjecture of Wilson}
For positive integers $n, k,$ and $s$, the following ideal $I_{n,k,s} \subseteq \QQ[\xx_n]$ was introduced
in \cite{HRS}:
\begin{equation}
I_{n,k,s} := \langle e_n(\xx_n), e_{n-1}(\xx_n), \dots, e_{n-k+1}(\xx_n), x_1^s, x_2^s, \dots, x_n^s \rangle.
\end{equation}
Let $R_{n,k,s} := \QQ[\xx_n]/I_{n,k,s}$ be the corresponding quotient ring.

When $k \leq s$, we have $R_{n,k,s} = R_{n,\lambda}$ where 
$\lambda = (1^k, 0^{s-k})$ and the structure of $R_{n,k,s}$ as a graded $\symm_n$-module was determined
by Haglund-Rhoades-Shimozono \cite{HRS}.
Indeed, the ring $R_{n,k,s}$ was one of the motivating examples for defining $R_{n,\lambda}$ 
for general $\lambda$.

When $k > s$, the structure of $R_{n,k,s}$ was not studied in \cite{HRS} because the Gr\"obner theory 
of the ideal $I_{n,k,s}$ was more complicated in this case.
Although it is not immediately obvious, we will establish that $R_{n,k,s}$ is also an instance of the 
$R_{n,\lambda}$ rings when $k > s$.

For positive integers $k$ and $s$, write
$k = q s + r$
for  integers $q, r \geq 0$ with $r < k$.  We define $\lambda(k,s) := ((q+1)^r, q^{s-r})$ 
to be the partition of $k$ given by $r$ copies of $q+1$ followed by $s-r$ copies of $q$.

\begin{proposition}
\label{quotient-remainder}
For any positive integers $n, k, s$ we have
$I_{n,k,s} = I_{n,\lambda}$ and $R_{n,k,s} = R_{n,\lambda}$ where $\lambda = \lambda(k,s)$.
\end{proposition}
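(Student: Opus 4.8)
The plan is to show the two ideals $I_{n,k,s}$ and $I_{n,\lambda}$ are literally equal (so the rings coincide automatically), where $\lambda = \lambda(k,s) = ((q+1)^r, q^{s-r})$ with $k = qs+r$ and $0 \le r < s$. Both ideals contain the power generators $x_1^s, \dots, x_n^s$, so it suffices to prove that, modulo the power ideal $P := \langle x_1^s, \dots, x_n^s\rangle$, the remaining generators of the two ideals generate the same thing. First I would record the conjugate partition: $\lambda(k,s)'$ has $q+1$ parts, namely $\lambda' = (s, s, \dots, s, r)$ with $q$ copies of $s$ followed by one copy of $r$ (and trailing zeros out to length $n$). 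In particular $\lambda'_n = \lambda'_{n-1} = \cdots = 0$ for all but the first $q+1$ entries, and the partial sums from the tail are: $\lambda'_n + \cdots + \lambda'_{n-j+1} = 0$ for $j \le n-q-1$, equals $r$ for $j = n-q$, and equals $r + (j-(n-q))\cdot s$ for $n-q \le j \le n$.

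Next I would unwind Griffin's generating condition ``$d > |S| - (\lambda'_n + \cdots + \lambda'_{n-|S|+1})$'' for each possible size $|S| = j$. The key structural observation is that the $x_i^s$ already kill high powers, so the only elementary-symmetric generators $e_d(S)$ that matter are those with $d$ not much larger than the threshold. I would argue: for $|S| = n$, the threshold is $n - (r + qs) = n - k$, so we get $e_d(\xx_n)$ for all $d > n-k$, i.e. $e_{n-k+1}, \dots, e_n$ of the full variable set — exactly the HRS generators. For $|S| = j < n$, the threshold is $j$ minus a tail-sum that is at most $\max(0, r + (j - (n-q))s)$; a short case analysis shows the threshold is always $\ge \min(j, s \cdot (\text{something}))$ in a way that forces every required $e_d(S)$ with $|S| < n$ to be expressible modulo $P$ in terms of the $e_i(\xx_n)$ with $i \ge n-k+1$ together with the $x_i^s$. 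Concretely, when $|S| < n$ the threshold turns out to be at least $|S| - r$ if $|S| \ge n - q$ and at least $|S|$ (hence $e_d(S) = 0$ since $d > |S|$) if $|S| < n-q$; then one uses the standard identity $e_d(S) = \sum_{i \notin S}$ (telescoping via $e_d(S \cup \{i\}) = e_d(S) + x_i e_{d-1}(S)$) to climb from $e_d(S)$ up to $e_d(\xx_n)$, picking up terms divisible by powers of the $x_i$ which lie in $P$ once the power exceeds $s-1$. This is the kind of Gröbner-flavored argument HRS avoided when $k > s$, so I would be careful to present the telescoping cleanly.

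For the reverse containment I would check that each HRS generator lies in $I_{n,\lambda}$: the $x_i^s$ are shared, and $e_d(\xx_n)$ for $d > n-k$ is precisely the $|S| = n$ case of Griffin's generating set since the threshold there is exactly $n - k$. So $I_{n,k,s} \subseteq I_{n,\lambda}$ is immediate, and the real content is the other inclusion $I_{n,\lambda} \subseteq I_{n,k,s}$, i.e. showing Griffin's extra low-$|S|$ generators are redundant given the full-variable $e_d$'s and the power generators.

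The main obstacle I anticipate is the bookkeeping in the case analysis on $|S|$: one must verify that for \emph{every} subset $S$ and every degree $d$ exceeding Griffin's threshold, either $e_d(S)$ vanishes outright (because $d > |S|$), or the telescoping identity expresses $e_d(S)$ modulo $P$ in terms of $\{e_i(\xx_n) : i > n-k\}$. The arithmetic hinges on the precise tail-sums of $\lambda(k,s)'$ computed above, and on the fact that $k = qs + r$ with $r < s$ makes the ``height'' $q+1$ of $\lambda'$ interact correctly with the cutoff $x_i^s$. A clean way to organize it: induct downward on $n - |S|$, using $e_d(S \cup \{i\}) - e_d(S) = x_i e_{d-1}(S)$, and observe that the threshold for $S \cup \{i\}$ is at least the threshold for $S$ (the tail-sum can only grow), so membership propagates upward from size $j$ to size $j+1$; the base case $|S| = n$ is the HRS set. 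I would also double-check the degenerate cases $r = 0$ (so $\lambda = (q^s)$ is rectangular), $q = 0$ (so $k < s$, recovering the known HRS case $\lambda = (1^k, 0^{s-k})$), and $k$ a multiple of $s$.
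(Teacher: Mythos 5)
Your plan is essentially the paper's proof: the containment $I_{n,k,s} \subseteq I_{n,\lambda}$ is immediate because the HRS generators appear among Griffin's, and the reverse containment is proved by descending induction on $|S|$ with base case $|S| = n$, using the telescoping identity iterated $s$ times so that the residual term $(-1)^s x_i^s e_{d-s}(S)$ is absorbed by the power generators. Your computation of $\lambda' = (s^q, r)$ and of the tail sums is correct, as is the observation that $e_d(S) = 0$ whenever $|S| < n - q$.

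One claim in your inductive step is backwards, and as stated it would prevent the induction from closing: you assert that the threshold for $T = S \cup \{i\}$ is \emph{at least} the threshold for $S$ ``because the tail-sum can only grow.'' The tail sum does grow, but for every $S$ in the relevant range $|S| \geq n - q$ it grows by $\lambda'_{n - |S|} = s$ while $|S|$ grows by only $1$, so the threshold $|S| - (\lambda'_n + \cdots + \lambda'_{n-|S|+1})$ \emph{drops} by $s - 1$ in passing from $S$ to $T$. This drop is exactly what the argument requires: after iterating the telescoping $s$ times you must know that all $s$ polynomials $e_d(T), e_{d-1}(T), \dots, e_{d-s+1}(T)$ are again Griffin generators, i.e.\ that even $d - s + 1$ exceeds the threshold for $T$; this follows from $d > \mathrm{thr}(S)$ together with $\mathrm{thr}(T) = \mathrm{thr}(S) - (s-1)$, and would not follow from your stated inequality (which would give no control on $\mathrm{thr}(T)$ at all). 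The same confusion shows in the phrase ``membership propagates upward from size $j$ to size $j+1$'': the induction runs downward from the full variable set. With the inequality corrected --- and it is immediate from the tail sums you already wrote down --- your argument coincides with the one in the paper.
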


Proposition~\ref{quotient-remainder} implies that 
$\dim R_{n,k,s} = |\OP_{n,\lambda(k,s)}|$.
This combinatorial expression for $\dim R_{n,k,s}$ was conjectured by Andy Wilson
(personal communication).

\begin{proof}
Let $\lambda =  \lambda(k,s)$.  Every generator of $I_{n,k,s}$ is also a generator of $I_{n,\lambda}$,
so we have the containment of ideals
\begin{equation}
I_{n,k,s} \subseteq I_{n,\lambda}.
\end{equation}
It therefore suffices to show that every generator of $I_{n,\lambda}$ lies in $I_{n,k,s}$.
The generators $x_i^s$ of $I_{n,\lambda}$ are also generators of $I_{n,k,s}$, so it suffices to 
check that the elementary symmetric polynomials $e_d(S)$ in partial variable sets $S \subseteq [n]$
which appear as generators of $I_{n,\lambda}$ lie in $I_{n,k,s}$.

Let $S \subseteq [n]$ and $1 \leq d \leq n$ be such that $e_d(S)$ is a generator of $I_{n,\lambda}$.
We prove that $e_d(S) \in I_{n,k,s}$ by descending induction on the size $|S|$ of $S$.
If $|S| = n$, we have $e_d(S) = e_d(\xx_n)$ and $d > n-k$, so that $e_d(S)$ is a generator of
$I_{n,k,s}$. 

If $|S| < n$, choose an arbitrary index $i \in [n] - S$ and let $T = S \cup \{i\}$.
The polynomial $e_d(S)$ may be expressed as 
\begin{multline}
\label{crux-of-wilson-proof}
e_d(S) = e_d(T) - x_i e_{d-1}(S) 
= e_d(T) - x_i e_{d-1}(T) + x_i^2 e_{d-2}(S)  \\ = \cdots =
\left[ \sum_{j = 0}^{s-1} (-1)^j x_i^j e_{d-j}(T) \right] + (-1)^s x_i^s e_{d-s}(S).
\end{multline}
We claim that each term in the sum on the right-hand side of Equation~\eqref{crux-of-wilson-proof}
lies in $I_{n,k,s}$, so that $e_d(S) \in I_{n,k,s}$.
Indeed, since $e_d(S)$ is a generator of $I_{n,\lambda}$, each column aside from the shortest column
of $\lambda$ has $s$ boxes,
and $T$ has one more element than $S$, each of the $s$ polynomials
$e_d(T), e_{d-1}(T), \dots, e_{d-s+1}(T)$ are  generators of $I_{n,\lambda}$, and so lie in $I_{n,k,s}$ by induction.
In particular, the sum in the square brackets lies in $I_{n,k,s}$. Furthermore,
the monomial $x_i^s$ is a generator of $I_{n,k,s}$, so that $(-1)^s x_i^s e_{d-s}(S) \in I_{n,k,s}$.
We conclude that $e_d(S) \in I_{n,k,s},$ finishing the proof.
\end{proof}

\subsection{The lexicographical leading terms of harmonic polynomials}
Let $<$ be the {\em lexicographical order} 
on monomials in $\QQ[\xx_n]$. That is, we have 
$x_1^{a_1} \cdots x_n^{a_n} < x_1^{b_1} \cdots x_n^{b_n}$ if and only if there is some $1 \leq i \leq n$
such that $a_1 = b_1, \dots, a_{i-1} = b_{i-1},$ and $a_i < b_i$.
It is well known that the lexicographical order $<$ is a {\em monomial order}, meaning that
\begin{itemize}
\item we have $1 \leq m$ for any monomial $m$ in $x_1, \dots, x_n$ and
\item given three monomials $m, m', m''$ with $m \leq m'$, we have $m \cdot m'' \leq m' \cdot m''$.
\end{itemize}
If $f \in \QQ[\xx_n]$ is a nonzero polynomial, let $\initial_< (f)$ denote the largest monomial in lexicographical
order appearing in $f$.

The coinversion codes $\CCC_{n,\lambda}$ of ordered 
set partitions in $\OP_{n,\lambda}$ are precisely the exponent sequences of the lexicographical
leading monomials of nonzero polynomials in $V_{n,\lambda}$.
This gives another connection between harmonic polynomials and ordered set partitions.

\begin{theorem}
\label{leading-term-theorem}
Let $k \leq n$ be positive integers and let $\lambda = (\lambda_1 \geq \cdots \geq \lambda_s)$ be a partition
of $k$ into $s$ nonnegative parts.  If $f \in V_{n,\lambda}$ is any nonzero harmonic polynomial with
$\initial_< f = x_1^{c_1} x_2^{c_2} \cdots x_n^{c_n}$ then 
$(c_1, c_2, \dots, c_n) \in \CCC_{n,\lambda}$.
\end{theorem}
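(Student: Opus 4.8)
The plan is to derive Theorem~\ref{leading-term-theorem} from the harmonic basis of Theorem~\ref{harmonic-basis} together with a standard fact about leading monomials of subspaces. The fact I would use is: if $V \subseteq \QQ[\xx_n]$ is a finite-dimensional subspace and $\{g_1, \dots, g_m\}$ is a basis of $V$ whose leading monomials $\initial_< g_1, \dots, \initial_< g_m$ are pairwise distinct, then every nonzero $f \in V$ satisfies $\initial_< f \in \{\initial_< g_1, \dots, \initial_< g_m\}$. Granting this, the theorem follows by taking $V = V_{n,\lambda}$ and $\{g_i\}$ to be the harmonic basis $\{\delta_\sigma : \sigma \in \OP_{n,\lambda}\}$, using that the leading monomial of $\delta_\sigma$ has exponent sequence $\code(\sigma)$ and that $\code$ maps $\OP_{n,\lambda}$ bijectively onto $\CCC_{n,\lambda}$ (Theorem~\ref{code-is-bijection}), so in particular the $\delta_\sigma$ have pairwise distinct leading monomials and those leading monomials are precisely the $x_1^{c_1}\cdots x_n^{c_n}$ with $(c_1,\dots,c_n) \in \CCC_{n,\lambda}$.

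First I would record the auxiliary fact; its proof is elementary. Given a nonzero $f = \sum_i a_i g_i \in V$, let $j$ be the index with $a_j \neq 0$ for which $\initial_< g_j$ is lexicographically largest; this is unambiguous because the leading monomials are distinct. Every monomial occurring in $g_i$ is $\leq \initial_< g_i$, so every monomial occurring in $\sum_{i \neq j} a_i g_i$ is $< \initial_< g_j$, whereas $\initial_< g_j$ occurs in $a_j g_j$ with nonzero coefficient and nowhere else in the sum. Hence $\initial_< f = \initial_< g_j$, as claimed.

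Next I would assemble the ingredients for the application. Griffin's basis (Theorem~\ref{griffin-basis-theorem}) shows $R_{n,\lambda}$, and hence $V_{n,\lambda}$, is finite-dimensional. Theorem~\ref{harmonic-basis} provides the basis $\{\delta_\sigma : \sigma \in \OP_{n,\lambda}\}$ of $V_{n,\lambda}$. As already observed in the proof of Theorem~\ref{generating-harmonic-set} (combining Observation~\ref{maxcode-leading}, Lemma~\ref{code-less-than-max}, and Definition~\ref{partition-polynomial}), the lexicographical leading term of $\delta_\sigma$ is a nonzero scalar multiple of $x_1^{c_1}\cdots x_n^{c_n}$ where $(c_1,\dots,c_n) = \code(\sigma)$. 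Finally, Theorem~\ref{code-is-bijection} says $\sigma \mapsto \code(\sigma)$ is a bijection $\OP_{n,\lambda} \to \CCC_{n,\lambda}$, which guarantees that the leading monomials of the $\delta_\sigma$ are pairwise distinct and constitute exactly $\{ x_1^{c_1}\cdots x_n^{c_n} : (c_1,\dots,c_n) \in \CCC_{n,\lambda}\}$. Applying the auxiliary fact to $V_{n,\lambda}$ then yields that any nonzero harmonic $f$ has $\initial_< f$ with exponent sequence in $\CCC_{n,\lambda}$, which is the theorem. (The reverse inclusion also follows, so the leading monomials of nonzero elements of $V_{n,\lambda}$ are precisely the $x^c$ with $c \in \CCC_{n,\lambda}$.)

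I do not anticipate a substantial obstacle; the only point requiring care is that the auxiliary fact needs the basis to have distinct leading monomials, and this is supplied exactly by the injectivity half of Theorem~\ref{code-is-bijection}. An alternative avoiding the harmonic basis would be to argue directly via a Gröbner basis of $I_{n,\lambda}$, identifying the leading-monomial set of $V_{n,\lambda}$ with the complement of $\initial_<(I_{n,\lambda})$ among all monomials; but this appears to demand more of Griffin's Gröbner theory than the self-contained route above.
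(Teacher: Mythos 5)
Your proof is correct, but it takes a genuinely different route from the paper's. You derive the theorem as a corollary of Theorem~\ref{harmonic-basis}: the basis $\{\delta_{\sigma} : \sigma \in \OP_{n,\lambda}\}$ has pairwise distinct leading monomials whose exponent sequences are exactly $\CCC_{n,\lambda}$ (by Theorem~\ref{code-is-bijection} and the triangularity of $\delta_{\sigma}$ from Observation~\ref{maxcode-leading} and Definition~\ref{partition-polynomial}), and your elementary fact about bases with distinct leading monomials then forces every nonzero element of the span to have its leading monomial among these. This is not circular, since the proofs of Theorems~\ref{generating-harmonic-set} and \ref{harmonic-basis} nowhere invoke Theorem~\ref{leading-term-theorem}. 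The paper instead argues by induction on $n$: it peels off the first coordinate using the recursion \eqref{disjoint-union-decomposition-two} for $\CCC_{n,\lambda}$ and shows, via identities among the generators $e_d(S)$ and a case analysis on $c_1$, that $x_1^{c_1}\odot f$ lands in a relabeled copy of $V_{n-1,\lambda^{(c_1+1)}}$ or $V_{n-1,\lambda}$. The trade-off is that your argument rests on Griffin's Theorem~\ref{griffin-basis-theorem}, which supplies the dimension count $\dim V_{n,\lambda}=|\CCC_{n,\lambda}|$ underlying Theorem~\ref{harmonic-basis}, whereas the paper's inductive proof is independent of that input and therefore yields the upper bound \eqref{harmonic-dimension-upper-bound} as a genuinely independent consequence --- a point the paper makes explicitly right after the theorem. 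Your shorter route proves the stated theorem, but forfeits that independent derivation of the dimension bound.
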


\begin{proof}
The proof is by induction on $n$. If $n = 1$, then $\lambda = (0^s)$ or $\lambda = (1,0^{s-1})$.
If $\lambda = (0^s)$ then $I_{1, (0^s)} = \langle x_1^s \rangle \subseteq \QQ[x_1]$ so that 
$V_{1,(0^s)} = \mathrm{span}_{\QQ} \{ 1, x_1, x_1^2, \dots, x_1^{s-1} \}$.
Since $\CCC_{1,(0^s)} = \{ (0), (1), (2), \dots, (s-1) \}$, the result is true in this case.
If $\lambda = (1, 0^{s-1})$ then $I_{1,(1,0^{s-1})} = \langle x_1 \rangle \subseteq \QQ[x_1]$ so that
$V_{1,(1,0^{s-1})} = \QQ$. Since
$\CCC_{1,(1,0^{s-1})} = \{ (0) \}$, this completes the proof when $n = 1$.

When $n$ is arbitrary and $\lambda = (0^s)$ is a partition of $k = 0$, we may compute directly that
\begin{equation*}
V_{n,(0^s)} = \mathrm{span}_{\QQ} \{ x_1^{a_1} \cdots x_n^{a_n} \,:\, a_1, \dots, a_n < s \},
\end{equation*}
from which the theorem follows in this case.
We therefore assume going forward that $n, k > 0$.

Our main tool will be Griffin's disjoint union decomposition \eqref{disjoint-union-decomposition}
of $\CCC_{n,\lambda}$ according to the first terms of its sequences, recapitulated 
here for convenience. Recall that $\ell(\lambda)$ is the number of nonzero parts of $\lambda$.
\begin{multline}
\label{disjoint-union-decomposition-two}
\CCC_{n,\lambda} = \bigsqcup_{i = 1}^{\ell(\lambda)}
\{ (i-1, c_2, \dots, c_n) \,:\, (c_2, \dots, c_n) \in \CCC_{n-1, \lambda^{(i)}} \} \, \, \, \sqcup \\
\bigsqcup_{j = \ell(\lambda) + 1}^{s}
\{ (j-1, c_2, \dots, c_n) \,:\, (c_2, \dots, c_n) \in \CCC_{n-1, \lambda} \} 
\end{multline}

Let $f \in V_{n,\lambda}$ be a nonzero polynomial.  Since $f$ is harmonic and $x_1^s \in V_{n,\lambda}$,
we  have
\begin{equation}
x_1^s \odot f = \partial^s f/\partial x_1^s= 0.
\end{equation}
In particular, if $\initial_<(f) = x_1^{c_1} x_2^{c_2} \cdots x_n^{c_n}$ then $c_1 < s$ and, by the definition 
of $<$, no monomial appearing in $f$ has an exponent of $x_1$ greater than $c_1$.
In particular, the polynomial $x_1^{c_1} \odot f$ does not involve the variable $x_1$.
Since the lexicographical order is a monomial order, we have
\begin{equation}
\label{initial-monomial-relation}
\initial_< (f) = x_1^{c_1} \cdot \initial_< (x_1^{c_1} \odot f).
\end{equation}

Let  $W_{n-1,\lambda^{(c_1+1)}} \subseteq \QQ[x_2, x_3, \dots, x_n]$ 
be the image of the subspace $V_{n-1,\lambda^{(c_1+1)}} \subseteq \QQ[x_1, x_2, \dots, x_{n-1}]$
under the algebra map $x_i \mapsto x_{i+1}$. 
Similarly, let $W_{n-1,\lambda}$ be the image of $V_{n-1,\lambda}$ under $x_i \mapsto x_{i+1}$.
Thanks to \eqref{disjoint-union-decomposition-two},
\eqref{initial-monomial-relation}, and induction, it suffices to prove the following
claim. 

{\bf Claim:} {\em If $c_1 < \ell(\lambda)$ then $x_1^{c_1} \odot f \in W_{n-1, \lambda^{(c_1+1)}}$. 
If $\ell(\lambda) \leq c_1 < s$ then $x_1^{c_1} \odot f \in W_{n-1,\lambda}$.}

Let $J_{n-1,\lambda^{(c_1)}}, J_{n-1,\lambda} \subseteq \QQ[x_2, x_3, \dots, x_n]$ be the images of 
$I_{n-1, \lambda^{(c_1)}}, I_{n,\lambda} \subseteq \QQ[x_1, x_2, \dots, x_{n-1}]$
under the algebra map $x_i \mapsto x_{i+1}$.
We verify that $x_1^{c_1} \odot f$ is annihilated by the generators of the relevant $J$-ideal.
For the generators of the form $x_i^s$ this is clear, so we need only verify this statement for generators of 
the form $e_d(S)$. 

The proof of our Claim hinges on the following observation.

{\bf Observation:} {\em Let $S \subseteq \{2, 3, \dots, n \}$ and write
$S' := S \cup \{1\}$.
Let $d \geq 1$ be such that 
$e_d(S)$ appears as a generator of $J_{n-1,\lambda^{(c_1+1)}}$ (in the case $c_1 < \ell(\lambda)$)
or $J_{n-1, \lambda}$ (in the case $\ell(\lambda) \leq c_1 < s$).
Then $e_{d+1}(S')$ appears as a generator of $I_{n,\lambda}$ so that 
$e_{d+1}(S') \odot f = 0$.}

Let $d \geq 1$ and $S \subseteq \{2, 3, \dots, n\}$ be such that $e_d(S)$ is a 
nonzero generator of the $J$-ideal
described in the Observation.
Our analysis breaks up into cases depending on the value of $c_1$.

{\bf Case 1:} {\em We have $c_1 > 0$.}

In this case we compute
\begin{multline}
\label{case-one-induction}
e_d(S) \odot (x_1^{c_1} \odot f) = (x_1^{c_1} e_d(S)) \odot f = ( x_1^{c_1 - 1} e_{d+1}(S') - 
x_1^{c_1 - 1} e_{d+1}(S)) \odot f \\
= x_1^{c_1 - 1} e_{d+1}(S')  \odot f - x_1^{c_1 - 1} e_{d+1}(S) \odot f .
\end{multline}
By our Observation, we have $e_{d+1}(S') \odot f = 0$, so the first term on the right-hand side of
Equation~\eqref{case-one-induction} vanishes and we have
\begin{equation}
\label{case-one-induction-two}
e_d(S) \odot (x_1^{c_1} \odot f) = - x_1^{c_1 - 1} e_{d+1}(S) \odot f .
\end{equation}
We may now show $e_d(S) \odot (x_1^{c_1} \odot f) = 0$ by descending induction on $d$.
In the base case $d = |S|$, we have $e_{d+1}(S) = 0$ so this follows from
Equation~\eqref{case-one-induction-two}.
When $d < |S|$, the polynomial $e_{d+1}(S)$ is a nonzero generator of the $J$-ideal described in
the Observation, so that $e_{d+1}(S) \odot f = 0$ by induction, so 
Equation~\eqref{case-one-induction-two} shows $e_d(S) \odot (x_1^{c_1} \odot f) = 0$.

{\bf Case 2:} {\em We have $c_1 = 0$.}

Here is we make use of our assumption that $\lambda$ is a partition of a {\em positive} integer $k$ 
so that $\lambda_{c_1 + 1} = \lambda_1 > 0$.
We compute 
\begin{equation}
\label{case-two-more-direct}
e_d(S) \odot (x_1^{c_1} \odot f) = e_d(S) \odot f = e_d(S') \odot f.
\end{equation}
The second equality is true because $f$ does not involve the variable $x_1$ so that 
every monomial appearing in $e_d(S')$ involving $x_1$ annihilates $f$.
Since $\lambda^{(c_1 + 1)} = \lambda^{(1)}$ is the non-decreasing rearrangement of
$(\lambda_1 - 1, \lambda_2, \dots, \lambda_s)$
and $e_d(S)$ is a nonzero generator of $J_{n-1,\lambda^{(1)}}$, we see that 
$e_d(S')$ is a nonzero generator of $I_{n,\lambda}$ so that 
the right-hand side of Equation~\eqref{case-two-more-direct} vanishes.
\end{proof}

Theorem~\ref{leading-term-theorem} directly gives the upper bound
\begin{equation}
\label{harmonic-dimension-upper-bound}
\dim V_{n,\lambda} \leq | \CCC_{n,\lambda} |
\end{equation}
on the dimension of the harmonic space $V_{n,\lambda}$.
Indeed, if $N > | \CCC_{n,\lambda} |$ and we are given $N$ harmonic polynomials 
$f_1, f_2, \dots, f_N \in V_{n,\lambda}$, there exist $c_1, c_2, \dots, c_N \in \QQ$
not all zero so that 
\begin{quote}
for any monomial $m$ whose exponent sequence appears in $\CCC_{n,\lambda}$, the coefficient of 
$m$ in $f := c_1 f_1 + c_2 f_2 + \dots + c_N f_N$ is zero.
\end{quote}
But  $f \in V_{n,\lambda}$ is harmonic, so  Theorem~\ref{leading-term-theorem} forces $f = 0$,
implying that $f_1, f_2, \dots, f_N$ are linearly dependent.
Of course, Equation~\eqref{harmonic-dimension-upper-bound} 
also follows from Griffin's \cite{Griffin} result $\dim R_{n,\lambda}  = |\CCC_{n,\lambda}|$ and 
the vector space isomorphism between the quotient ring and harmonic space of a given ideal.

\section{Conclusion}
\label{Conclusion}

Let $k \leq n$ be positive integers and let $\lambda \vdash k$.
In this paper we studied the harmonic space $V_{n,\lambda}$ associated to 
the ring $R_{n,\lambda}$ using polynomials $\delta_T$ and $\delta_{\sigma}$ closely related to the Vandermonde
determinant $\delta_n \in \QQ[\xx_n]$.
In the case $\lambda_1 \leq 1$,
Rhoades and Wilson \cite{RW} gave an alternate harmonic-like model for $R_{n,\lambda}$ 
involving anticommuting variables as follows.

{\em Superspace} of rank $n$ is the $\QQ$-algebra  $\Omega_n$ given by a tensor product
\begin{equation}
\Omega_n := \QQ[x_1, \dots, x_n] \otimes \wedge \{ \theta_1, \dots, \theta_n \}
\end{equation}
of a rank $n$ polynomial ring (with generators $x_1, \dots, x_n$) with a rank $n$ exterior algebra
(with generators $\theta_1, \dots, \theta_n$).  
The terminology here comes from physics where the $x_i$  model the positions of bosons
and the $\theta_i$ model the positions of fermions.

The symmetric group $\symm_n$ acts on $\Omega_n$ by permuting the $x_i$ and $\theta_i$ simultaneously.
The  differentiation action $f \odot g$ of $\QQ[\xx_n]$ on itself extends to an action
$\QQ[\xx_n] \otimes \Omega_n \rightarrow \Omega_n$ of $\QQ[\xx_n]$
on superspace by acting on the first tensor factor.

Let $\varepsilon_n := \sum_{w \in \symm_n} \sign(w) \cdot w$ be the antisymmetrizing element of $\QQ[\symm_n]$.
For any $k \leq n$ and any  length $k$ sequence 
$\aaa = (a_1, \dots, a_r)$ of nonnegative integers, the {\em $\aaa$-superspace Vandermonde}
was defined in \cite{RW} to be the following element of $\Omega_n$:
\begin{equation}
\delta_n(\aaa) := \varepsilon_n \cdot ( x_1^{a_1} x_2^{a_2} \cdots x_r^{a_r} x_{r+1}^{n-r-1} \cdots x_{n-1}^1 x_n^0
\times \theta_1 \theta_2 \cdots \theta_{r} ).
\end{equation}
This reduces to the usual Vandermonde when $\aaa = \varnothing$ is the empty sequence.
Rhoades and Wilson \cite{RW} defined $V_n(\aaa)$ to be the smallest subspace of $\Omega_n$
containing $\delta_n(\aaa)$ which is closed under the differentiation action of $\QQ[\xx_n]$.
The vector space $V_n(\aaa)$ is a graded $\symm_n$-module.

\begin{theorem}
\label{superspace-result}
(Rhoades-Wilson \cite{RW})
Let $k \leq n$ be positive integers and let $\lambda = (1^k, 0^{s-k})$ be the partition with $k$ copies of $1$
and $s-k$ copies of $0$, for some $s \geq k$.
Let $\aaa = ((s-1)^{n-k})$ be the constant sequence with $n-k$ copies of $s-1$.

The graded $\symm_n$-module $V_n(\aaa)$ is isomorphic to $R_{n,\lambda}$ after grading reversal
and twisting by the sign representation.
\end{theorem}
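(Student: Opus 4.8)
The plan is to realize $V_n(\aaa)$ as a grading-reversed, $\sign$-twisted quotient of $\QQ[\xx_n]$ and then identify the relevant ideal with $I_{n,\lambda}$ using the generating set from Theorem~\ref{generating-harmonic-set}. The starting observation is that $\delta_n(\aaa)$ is $\sign$-isotypic: since $w\cdot\varepsilon_n=\sign(w)\,\varepsilon_n$ for all $w\in\symm_n$ and $\delta_n(\aaa)=\varepsilon_n\cdot g$ with $g=x_1^{s-1}\cdots x_{n-k}^{s-1}x_{n-k+1}^{k-1}\cdots x_n^0\cdot\theta_1\cdots\theta_{n-k}$, we get $w\cdot\delta_n(\aaa)=\sign(w)\,\delta_n(\aaa)$. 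Hence the $\symm_n$-equivariant, degree-lowering surjection $\phi\colon\QQ[\xx_n]\to V_n(\aaa)$, $f\mapsto f\odot\delta_n(\aaa)$, satisfies $w\cdot\phi(f)=\sign(w)\cdot\phi(w\cdot f)$; as $\delta_n(\aaa)$ is homogeneous of degree $D:=(n-k)(s-1)+\binom{k}{2}$ and lies in the fermionic degree $n-k$ part of $\Omega_n$ (which $\odot$ preserves), $\phi$ descends to an isomorphism of graded $\symm_n$-modules $V_n(\aaa)\cong\big(\QQ[\xx_n]/\ann_\odot(\delta_n(\aaa))\big)\otimes\sign$ after reversing the grading by $D$. (Here $\ann_\odot(\delta_n(\aaa))=\{f:f\odot\delta_n(\aaa)=0\}$, which is an ideal because $\odot$ is a commutative module action.) So everything reduces to proving $\ann_\odot(\delta_n(\aaa))=I_{n,\lambda}$.

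To compute this annihilator I would expand $\delta_n(\aaa)$ over the squarefree products $\theta_I:=\prod_{i\in I}\theta_i$ with $|I|=n-k$: grouping $\sum_{w\in\symm_n}\sign(w)\,w\cdot g$ according to the set $I=w(\{1,\dots,n-k\})$ and recognizing the remaining alternating sum as the Vandermonde determinant in the variables indexed by $J:=[n]\setminus I$, one gets $\delta_n(\aaa)=\sum_{|I|=n-k}\pm\big(\delta_{[n]\setminus I}\cdot\prod_{i\in I}x_i^{s-1}\big)\theta_I$. By the factorization of $\delta_T$ recorded in Section~\ref{Harmonic}, the coefficient polynomial here is exactly $\delta_{T_I}$, where $T_I\in\Inj((1^k),\le n)$ is the single-column tableau with entry set $[n]\setminus I$. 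Since the $\theta_I$ are linearly independent over $\QQ[\xx_n]$ and the tableaux $T_I$ exhaust $\Inj((1^k),\le n)$ as $I$ ranges over $(n-k)$-subsets, this yields $\ann_\odot(\delta_n(\aaa))=\bigcap_{T\in\Inj((1^k),\le n)}\ann_\odot(\delta_T)$.

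The last step identifies this intersection with $I_{n,\lambda}$. Each $\ann_\odot(\delta_T)$ is an ideal, so the intersection is the annihilator of the $\QQ[\xx_n]$-submodule generated (under $\odot$) by all the $\delta_T$, which is $V_{n,\lambda}$ by Theorem~\ref{generating-harmonic-set} (with $\lambda=(1^k,0^{s-k})$). To finish one shows $\ann_\odot(V_{n,\lambda})=I_{n,\lambda}$: using the adjunction $\langle fg,v\rangle=\langle g,f\odot v\rangle$ (immediate from the definitions, since $g\odot(f\odot v)=(fg)\odot v$) and $V_{n,\lambda}=I_{n,\lambda}^\perp$, the relation $f\odot V_{n,\lambda}=0$ forces $\langle fg,v\rangle=0$ for all $g$ and all $v\in I_{n,\lambda}^\perp$, hence $fg\in(I_{n,\lambda}^\perp)^\perp=I_{n,\lambda}$ for all $g$ and in particular $f\in I_{n,\lambda}$; the reverse inclusion is the same calculation run backwards. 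Putting it all together, $\ann_\odot(\delta_n(\aaa))=I_{n,\lambda}$, and thus $V_n(\aaa)\cong R_{n,\lambda}\otimes\sign$ as graded $\symm_n$-modules after grading reversal.

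The main obstacle is less a calculation than seeing that the two cosmetic operations in the statement are structurally forced: the twist by $\sign$ comes from $\delta_n(\aaa)$ spanning a copy of $\sign$ (the fermionic factor $\theta_1\cdots\theta_{n-k}$ is exactly what stops the antisymmetrizer from collapsing, whereas in the classical case $\delta_n$ is not $\sign$-isotypic and no twist occurs), and the grading reversal comes from $\delta_n(\aaa)$ being a cyclic generator sitting in top degree. Once this framework is in place, the only substantive point is the annihilator identity $\ann_\odot(\delta_n(\aaa))=I_{n,\lambda}$, whose nontrivial inclusion is precisely the content of Theorem~\ref{generating-harmonic-set} --- that the $\delta_T$ \emph{generate}, not merely lie in, $V_{n,\lambda}$ --- transported through the $\theta$-expansion of $\delta_n(\aaa)$.
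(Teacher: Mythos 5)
Your argument is correct, but note that the paper does not prove Theorem~\ref{superspace-result} at all: it is quoted from Rhoades--Wilson \cite{RW}, so there is no in-paper proof to compare against. What you have produced is a legitimate independent derivation that leverages the machinery developed earlier in this paper. The key steps all check out: $w\cdot\varepsilon_n=\sign(w)\varepsilon_n$ does make $\delta_n(\aaa)$ sign-isotypic, so the surjection $f\mapsto f\odot\delta_n(\aaa)$ intertwines the natural action with the sign-twisted one and reverses degree against $D=(n-k)(s-1)+\binom{k}{2}$ (which is indeed the top degree of $R_{n,\lambda}$ for $\lambda=(1^k,0^{s-k})$, a reassuring consistency check); the $\theta_I$-expansion correctly produces $\pm\delta_{T_I}$ as coefficients because $\Omega_n$ is a free $\QQ[\xx_n]$-module on the $\theta_I$ and $\odot$ acts only on the bosonic factor, and the single-column tableaux $T_I$ do exhaust $\Inj((1^k),\le n)$; and the identity $\ann_\odot(V_{n,\lambda})=I_{n,\lambda}$ follows from the adjunction $\langle fg,v\rangle=\langle g,f\odot v\rangle$ together with $(I^\perp)^\perp=I$, which holds here because the form is positive definite on each finite-dimensional graded piece. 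There is no circularity: Theorem~\ref{generating-harmonic-set} is proved in this paper from Lemma~\ref{delta-is-harmonic}, Griffin's basis theorem, and Theorem~\ref{code-is-bijection}, none of which rely on the superspace result. The trade-off is that your route is not self-contained relative to \cite{RW} but is cheap given this paper: the entire content of the theorem collapses to the observation that $\delta_n(\aaa)$ packages the full generating set $\{\delta_T\}$ of $V_{n,\lambda}$ into a single superspace element, with the fermionic factor supplying the sign twist and the cyclic top-degree generator supplying the grading reversal. This is a clean way to see \emph{why} the theorem is true, and it is exactly the kind of argument one would hope generalizes toward Question~\ref{superspace-question}.
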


\begin{question}
\label{superspace-question}  
By considering more general parameters $\aaa$,
can the superspace Vandermondes $\delta_n(\aaa)$ be used to give models for the  
quotient rings $R_{n,\lambda}$ for more general partitions $\lambda$
 as subspaces of $\Omega_n$?
\end{question}

In the situation of Theorem~\ref{superspace-result} when $k = s$,
Rhoades and Wilson defined \cite{RW} an extension
$\mathbb{V}_n(\aaa)$ of the module $V_n(\aaa)$ by introducing $n$ new commuting variables $y_1, \dots, y_n$
and closing under polarization operators.
The space $\mathbb{V}_n(\aaa)$ is a doubly graded $\symm_n$-module, and it was conjectured \cite{RW}
that its bigraded character is given by the symmetric function
$\Delta'_{e_{k-1}} e_n$ appearing in the Delta Conjecture \cite{HRW} of Haglund, Remmel, and Wilson.
A solution to Question~\ref{superspace-question} could lead to representation-theoretic models
for more general delta operators $\Delta'_{s_{\lambda}}$ corresponding to Schur functions $s_{\lambda}$
(see \cite{HRS2} for more details on these operators).

\section{Acknowledgements}
\label{Acknowledgements}

The authors are grateful to Sean Griffin and Andy Wilson for many helpful conversations.
This project was performed as a Research Experience for Undergraduates at UC San Diego
in 2019-2020.
B. Rhoades was partially supported by NSF Grant DMS-1500838.

\end{document}